\DeclareMathOperator{\pic}{Pic}
\DeclareMathOperator{\Div}{Div}
\DeclareMathOperator{\typeI}{Type \ I}
\DeclareMathOperator{\typeII}{Type  \ II}
\newcommand{\Mm}[2]{\mathcal{M}_{#1,#2}}
\newcommand{\Mb}[1]{\overline{\mathcal{M}}_{#1}}
\newcommand{\Mmb}[2]{\overline{\mathcal{M}}_{#1,#2}}
\newcommand{\bitwo}{[\overline{\mathcal{B}}_2]_Q}
\newcommand{\bithree}{[\overline{\mathcal{B}}_3]_Q}
\newcommand{\invtwo}{[\overline{{I}}_6^{inv}]}
\newcommand{\invthree}{[\overline{{I}}_8^{inv}]}
\newcommand{\verma}{\pi_{1 2}^*([\overline{I}_6])}
\newcommand{\vermb}{\pi_{7 8}^*([\overline{I}_6])}
\newcommand{\Sym}[1]{\mathbb{S}_{#1}}
\newtheorem{theorem}{Theorem}
\newtheorem{proposition}[theorem]{Proposition}
\newtheorem{corollary}[theorem]{Corollary}
\newtheorem{lemma}[theorem]{Lemma}
\theoremstyle{definition}
\newtheorem{definition}{Definition}
\newtheorem{remark}{\textbf{Remark}}
\newtheorem{notation}[definition]{\textbf{Notation}}
\begin{document}

\title{The class of the bielliptic locus in genus $3$}
\author{Carel Faber}
\address{Department of Mathematics, KTH Royal Institute of Technology,
Lindstedtsv\"agen 25, 10044 Stockholm, Sweden.}
\email{faber@math.kth.se}
\address{Department of Mathematics, Utrecht University,
P.O. Box 80010, 3508 TA Utrecht, The Netherlands.}
\email{c.f.faber@uu.nl}
\author{Nicola Pagani}
\address{Institut f\"ur Algebraische Geometrie, Leibniz Universit\"at Hannover,
Welfengarten 1, 30167 Hannover, Germany.}
\email{npagani@math.uni-hannover.de}
\address{University of Liverpool,
Department of Mathematical Sciences,
Liverpool, L69 7ZL,
United Kingdom.}
\email{pagani@liv.ac.uk}

\begin{abstract} Let the bielliptic locus be the closure in the moduli space of stable curves of the locus of smooth curves that are double covers of genus $1$ curves. In this paper we compute the class of the bielliptic locus in $\Mb3$ in terms of a standard basis of the rational Chow group of codimension-$2$ classes in the moduli space.  Our method is to test the class on the hyperelliptic locus: this gives the desired result up to two free parameters, which are then determined by intersecting the locus with two surfaces in $\Mb3$.
\end{abstract}
\maketitle
\setcounter{tocdepth}{2}

\section {The main result}

A smooth bielliptic curve is a genus $g$ curve that is a double cover of a smooth genus $1$ curve, with $2g-2$ ramification points by the Riemann-Hurwitz formula. It follows that the locus of bielliptic curves has codimension $g-1$ in $\Mb{g}$, the moduli stack of stable curves of genus $g$. 
We are interested in the following general problem: given a family over a base of dimension $g-1$, how many bielliptic curves occur in it? We solve this problem by expressing the class of the bielliptic locus in terms of standard classes in the case of genus $3$ (and $2$).
Note that the bielliptic locus in genus $3$ is the main component of the
singular locus of the coarse moduli space $\overline{M}_3$.
The main result of this paper is the following.
\begin{theorem} \label{bithree} The rational stack class $\bithree$ of the bielliptic locus in $\Mb3$ equals
\begin{displaymath}
\bithree= \frac{2673}{2} \lambda^2- 267 \lambda \delta_0-651 \lambda \delta_1+ \frac{27}{2} \delta_0^2 + 69 \delta_0 \delta_1 + \frac{177}{2} \delta_1^2 - \frac{9}{2} \kappa_2.
\end{displaymath}
\end{theorem}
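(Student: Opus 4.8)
The plan is to determine the seven coefficients in the expansion
\[
\bithree = a\,\lambda^2 + b\,\lambda\delta_0 + c\,\lambda\delta_1 + d\,\delta_0^2 + e\,\delta_0\delta_1 + f\,\delta_1^2 + g\,\kappa_2
\]
with respect to the standard basis of the codimension-$2$ rational Chow group of $\Mb3$, by imposing geometric constraints that can actually be evaluated. The most productive constraint comes from restricting to the hyperelliptic divisor $\overline{\mathcal{H}}_3\subset\Mb3$, where intersection theory is accessible through the admissible-covers presentation $\overline{\mathcal{H}}_3\cong\overline{\mathcal{M}}_{0,8}/S_8$. Since a generic bielliptic genus-$3$ curve is a smooth plane quartic rather than a hyperelliptic curve, one has $\overline{\mathcal{B}}_3\not\subset\overline{\mathcal{H}}_3$, so the intersection is proper in the interior and the Gysin restriction $i^*\bithree$ (for $i\colon\overline{\mathcal{H}}_3\hookrightarrow\Mb3$) equals the class of the intersection locus, computed with the appropriate boundary multiplicities.

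To carry this out I would first record the restrictions to $\overline{\mathcal{M}}_{0,8}/S_8$ of $\lambda,\delta_0,\delta_1$ and of $\kappa_1,\kappa_2$ in terms of the boundary divisor classes of $\overline{\mathcal{M}}_{0,8}$, organised by the $S_8$-orbit type of the marked subsets, and then form the restrictions of all seven degree-two monomials. On the other side I would identify the bielliptic--hyperelliptic locus: a hyperelliptic $C$ is bielliptic exactly when its bielliptic involution commutes with the hyperelliptic involution $\iota$ (which is central), hence descends to an involution $\bar\tau$ of $\mathbb{P}^1=C/\iota$ permuting the eight branch points. This locus is therefore cut out by the configurations of eight points invariant under such an involution, a codimension-$2$ condition in $\overline{\mathcal{H}}_3$, together with the induced boundary strata. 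Computing its class in $A^2(\overline{\mathcal{H}}_3)$ and equating it with $i^*$ of the ansatz produces a linear system in $a,\dots,g$. Because the seven restricted classes $i^*(\lambda^2),\dots,i^*(\kappa_2)$ span a subspace of rank five, this system has a two-parameter family of solutions, which is precisely the statement that testing on the hyperelliptic locus determines $\bithree$ up to two free parameters.

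To pin down the remaining two parameters I would intersect the class against two complete test surfaces $S_1,S_2\subset\Mb3$ chosen to move transversally to $\overline{\mathcal{H}}_3$, so that together they detect the two-dimensional kernel of $i^*$ in independent directions. Natural candidates arise from the description of a genus-$3$ bielliptic curve as a double cover of a genus-$1$ curve branched at four points — the bielliptic locus being a compactified quotient of $\overline{\mathcal{M}}_{1,4}$, which has the expected dimension $4$ — and from degenerate families supported over the boundary divisors $\delta_0$ and $\delta_1$, including families of curves carrying a prescribed involution built from invariant point configurations. On each $S_i$ one computes by elementary means the seven intersection numbers $\lambda^2\cdot S_i,\dots,\kappa_2\cdot S_i$, and independently counts the bielliptic members of the family, that is the degree $\bithree\cdot S_i$. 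Substituting these evaluations into the two-parameter family of the previous step yields two further linear equations, which I expect to be independent and hence to fix the coefficients uniquely, giving the stated values.

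The main obstacle is the honest computation of the bielliptic--hyperelliptic class in $\overline{\mathcal{H}}_3$ together with the excess and multiplicity contributions along the boundary: one must track how the descended involution $\bar\tau$ interacts with each $S_8$-orbit of boundary strata of $\overline{\mathcal{M}}_{0,8}$, account for the automorphisms (the stack structure) that introduce rational multiplicities, and verify the reduced scheme structure wherever transversality is invoked. A secondary difficulty is exhibiting two test surfaces on which both the tautological intersection numbers and the bielliptic count are genuinely computable while remaining linearly independent modulo the image of $i^*$; were they to fail to separate the two kernel directions, one further test family would be required. Once these inputs are in hand, the remaining work — inverting the resulting linear system — is routine.
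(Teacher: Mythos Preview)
Your overall strategy matches the paper's: restrict to the hyperelliptic locus via $\phi\colon\overline{\mathcal{H}}_3^{adm}\to\Mb3$, compute the class of the bielliptic--hyperelliptic intersection there, and then use test surfaces to fix the remaining freedom. However, two of your specific claims are wrong, and while the errors happen to compensate numerically, the reasoning needs correction.

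First, the rank of $\phi^*\colon A^2(\Mb3)\to A^2(\Mmb08)^{\Sym8}$ is $6$, not $5$: the target is $6$-dimensional and the map is surjective, so the kernel is only $1$-dimensional. Second, you assert that properness of the intersection in the interior forces $i^*\bithree$ to equal the cycle class of the intersection locus. Proper intersection guarantees the expected dimension but not transversality; a priori $\phi^*\bithree=\epsilon\cdot\invthree$ for some positive integer $\epsilon$, and this intersection multiplicity of $\overline{\mathcal{B}}_3$ and $\overline{\mathcal{H}}_3$ along their common locus must be established separately. The paper does this by evaluating on a carefully chosen test surface $\Sigma_8$ consisting entirely of singular curves, where the bielliptic fibres can be enumerated directly and compared with the formula; this forces $\epsilon=1$.

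So the honest count of free parameters is still two --- one from $\ker\phi^*$ and one from the unknown $\epsilon$ --- and two test surfaces are indeed required, but not for the reason you state. You should also be aware that computing $\invthree$ itself is not a triviality: the paper obtains it by writing $\overline{I}_8$ as a component of $\pi_{12}^{-1}(\overline{I}_6)\cap\pi_{78}^{-1}(\overline{I}_6)$, using Vermeire's formula for $[\overline{I}_6]$, identifying and subtracting the extraneous components, and checking all the relevant multiplicities are $1$. Your last paragraph correctly anticipates that this boundary analysis is the main technical burden, but your sketch does not yet contain a mechanism for carrying it out.
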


\begin{proof} In \cite{faber} the first author has studied the codimension-$2$ rational Chow group of $\Mb3$. In \cite[Thm.~2.9]{faber} it is proved that
\begin{equation} \label{basiscod2}
\lambda^2, \quad \lambda \delta_0, \quad \lambda \delta_1, \quad \delta_0^2, \quad \delta_0 \delta_1, \quad \delta_1^2, \quad \kappa_2
\end{equation}
is a basis for $A^2_{\mathbb{Q}}(\Mb3$).

We obtain the result by considering the pull-back via the map from the moduli stack of admissible hyperelliptic curves:
\begin{equation}
\phi\colon \overline{\mathcal{H}}_3^{adm} \to \Mb3.
\end{equation}
We prove in Proposition \ref{pullbackprop} that the pull-back $\phi^*(\bithree)$ is a multiple of $\invthree$: the class of the locus in $\overline{\mathcal{H}}_3^{adm}$ of curves admitting an involution that acts without fixed points on the set of $8$ Weierstrass points (the superscript refers to the invariance of this locus under the $\Sym8$-action permuting the Weierstrass points). In Section \ref{testsurfaces} we prove that in fact $\phi^*(\bithree)=\invthree$, by computing the class $\bithree$ on a suitable test surface $\Sigma_8$. 

The moduli stack $\overline{\mathcal{H}}_3^{adm}$ admits a natural map to $\overline{\mathcal{M}}_{0,8}$; we identify the Chow groups of the latter with those of the former via the pull-back map. In Proposition \ref{classinvmz8} we compute the class $\invthree$ in terms of boundary strata classes.

As shown in Section \ref{Pulling},
the linear map
\begin{displaymath}
\phi^*\colon A^2_{\mathbb{Q}}(\Mb3) \to A^2_{\mathbb{Q}}(\overline{\mathcal{M}}_{0,8})^{\Sym8}
\end{displaymath}
is surjective; thus, it has $1$-dimensional kernel, since the image has dimension $6$ (cf.~the beginning of Section \ref{invar}). To express the class $\bithree$ in the chosen basis \eqref{basiscod2}, we write in Lemma \ref{matrix} the matrix associated to the above linear map, where we have fixed the invariant boundary strata classes as a basis for the image. To conclude, we need to calculate the missing parameter coming from the kernel of $\phi^*$. This is done in Section \ref{testsurfaces}, by evaluating the class $\bithree$ on a test surface $\Sigma_1$ containing bielliptic nonhyperelliptic curves.
\end{proof} 
\noindent In Section \ref{testsurfaces}, we obtain 
as a corollary that
the degree of the bielliptic locus in the $\mathbb{P}^{14}$
parameterizing plane quartic curves equals $225$. 
Thus, we prove a classical enumerative
geometry result via the moduli space, in a sense in the spirit
of Mumford's paper \cite{M-Enum}; we know of no other way to obtain this
result.

We observe that an easy but nontrivial check of Theorem \ref{bithree} can be made on a suitable test surface $\Sigma_2$ where there are no bielliptic curves; this is done in Section~\ref{testsurfaces}.
However, we stress that the pull-back to the hyperelliptic locus
is essential for us; we are not able to prove our main result
by test surfaces alone.

Note also that we can compute the class of the bielliptic locus in $\Mb2$ with exactly the same method, with the simplifying difference that all genus $2$ curves are hyperelliptic. 
We obtain the following result:

\begin{proposition}
\label{prop2}
The stack class $\bitwo$ of the bielliptic locus in $\Mb2$ can be written as
\begin{displaymath}
\bitwo= 15 \lambda + 3 \delta_1= \frac{3}{2} \delta_0 + 6 \delta_1.
\end{displaymath}
\end{proposition}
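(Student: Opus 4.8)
The plan is to run the genus-$3$ argument essentially verbatim, exploiting at each step the fact that in genus $2$ the map $\phi\colon \overline{\mathcal{H}}_2^{adm}\to \Mb2$ is birational: since every smooth genus-$2$ curve is hyperelliptic in a unique way (the $g^1_2$ is canonical), $\phi$ induces an isomorphism on coarse moduli spaces, and the ubiquitous central hyperelliptic involution only contributes a $\mathbb{Z}/2$ in every stabilizer, which does not affect rational Chow groups. Hence $\phi^*$ is a rational isomorphism of Chow groups, and the two ingredients that in genus $3$ required test surfaces -- fixing the undetermined multiple in $\phi^*(\bithree)=c\cdot\invthree$ and pinning down the one-dimensional kernel of $\phi^*$ -- both disappear. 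First I would establish the genus-$2$ analogue of Proposition \ref{pullbackprop}: a smooth genus-$2$ curve $y^2=f(x)$ is bielliptic precisely when, after a Möbius transformation, $f$ is even, i.e.\ when its six Weierstrass points are invariant under a fixed-point-free involution of $\mathbb{P}^1$ pairing them into three pairs, the two fixed points of the involution being disjoint from the branch locus. A Riemann--Hurwitz count ($2=2(2g'-2)+r$ forces $r=2$, $g'=1$) then confirms that the corresponding involution of $C$ has exactly two fixed points and genus-$1$ quotient. This identifies $\phi^*(\bitwo)$ with $\invtwo$, the class of the involution-invariant configurations, and because $\phi$ is birational the proportionality constant is $1$.

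Next I would compute $\invtwo$ as an $\Sym6$-invariant divisor on $\Mmb06$, the analogue of Proposition \ref{classinvmz8}. Writing $B_2$ and $B_3$ for the invariant boundary classes (the sums of the boundary divisors indexed by the $2$-element, resp.\ $3$-element, subsets of the six marked points), the rational $\Sym6$-invariant Picard group is two-dimensional, spanned by $B_2$ and $B_3$, so $\invtwo=\alpha B_2+\beta B_3$ for some rationals $\alpha,\beta$. I would determine $\alpha$ and $\beta$ by intersecting the locus $\overline{I}_6^{inv}$ with two explicit one-parameter families of six points, counting in each family the configurations that admit a fixed-point-free involution and comparing with the standard intersection numbers of the family against $B_2$ and $B_3$. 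The analogue of Lemma \ref{matrix} is the dictionary for the admissible-cover map: a collision of two Weierstrass points produces a genus-$1$ curve with a rational tail attached at two points, hence a non-separating node, so $B_2$ corresponds to $\delta_0$; a $(3,3)$ split produces two genus-$1$ components meeting at one point, so $B_3$ corresponds to $\delta_1$. Here $\phi^*\colon A^1_{\mathbb{Q}}(\Mb2)\to A^1_{\mathbb{Q}}(\Mmb06)^{\Sym6}$ is an isomorphism of two-dimensional spaces, with no kernel to account for.

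Translating $\invtwo=\alpha B_2+\beta B_3$ back through this dictionary, with the correct normalization factors, yields $\bitwo$ in the basis $\delta_0,\delta_1$, and I expect to recover $\bitwo=\tfrac{3}{2}\delta_0+6\delta_1$. Finally, applying Mumford's relation $10\lambda=\delta_0+2\delta_1$ on $\Mb2$ rewrites this as $15\lambda+3\delta_1$, giving both presentations in the statement.

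The main obstacle I anticipate is not the linear algebra, which is trivial here, but the correct bookkeeping of multiplicities and automorphism factors in two places: the precise normalization of the dictionary relating $B_2,B_3$ on $\Mmb06$ to $\delta_0,\delta_1$ on $\Mb2$, where the admissible covers acquire extra automorphisms over the boundary and the ramification of $\phi$ along each boundary stratum must be tracked; and the computation of $\invtwo$ as a \emph{closure}, where the involution-invariant locus can meet the boundary with multiplicity or acquire excess components that must be subtracted. Getting these factors right is exactly what forces the coefficients $\tfrac{3}{2}$ and $6$ rather than nearby values.
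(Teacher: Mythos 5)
Your overall architecture is the paper's: pull back along $\phi$, identify $\phi^*(\bitwo)$ with $\invtwo$ (with constant $1$ because in genus $2$ there is no proper hyperelliptic locus to intersect non-transversally), express $\invtwo$ in the invariant boundary basis, and translate back via $\phi^*\delta_0=2d_{2,4}$, $\phi^*\delta_1=\tfrac12 d_{3,3}$; the passage to $15\lambda+3\delta_1$ via $10\lambda=\delta_0+2\delta_1$ is also correct. One caveat on the setup: $\phi$ is not birational and does not induce an isomorphism of coarse spaces (the coarse space of $\overline{\mathcal{H}}_2^{adm}$ is $\overline{M}_{0,6}$, and $\phi$ factors through the degree-$720$ quotient by $\Sym6$); what is true, and what you actually use, is that $\phi^*$ is an isomorphism onto the $\Sym6$-invariant part of $A^1_{\mathbb{Q}}(\Mmb06)$, so this does not damage the argument.

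The genuine gap is that the only step producing actual numbers is never carried out: you set $\invtwo=\alpha B_2+\beta B_3$ and defer $\alpha,\beta$ to two unspecified test families, then ``expect'' the answer $\tfrac32\delta_0+6\delta_1$. Everything hinges on $\alpha=\beta=3$, and your own closing paragraph concedes that the boundary behaviour of the closure and the transversality of the test families are exactly where the difficulty lies — so as written the coefficients are unproved. The paper avoids this by quoting Vermeire's explicit divisor class \eqref{verm} for $[\overline{I}_6]$ and symmetrizing over the $15$ conjugates of $(12)(34)(56)$: the five type-$(2,4)$ terms have coefficients summing to $3$ and the single type-$(3,3)$ term has coefficient $2$, which after averaging over the $\Sym6$-orbit gives $\invtwo=3d_{2,4}+3d_{3,3}$ as in \eqref{proofgenus2}. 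Your test-curve route is viable (e.g.\ fixing five generic points and moving the sixth gives $5\alpha=15$ once one checks that all $15$ interior solutions are transverse and that the closure picks up no boundary contribution on that family), but those checks, together with the normalization constants in \eqref{relationgenus0hyper} that you also leave to ``bookkeeping'', must actually be done before the proposal counts as a proof.
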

\noindent This agrees with the result 
for the usual fundamental class
stated in \cite[p.~6]{faberbanach}.
\begin{proof} With the obvious adjustments of notation from the proof of the theorem above, we see in Proposition \ref{pullbackprop} that $\phi^*(\bitwo)=\invtwo$. The map $\phi^*$ is an isomorphism of $\pic_{\mathbb{Q}}(\overline{\mathcal{M}}_{2})$ with $\pic_{\mathbb{Q}}(\Mb{0,6})^{\Sym6}$, as recalled in \eqref{relationgenus0hyper}. The class $\invtwo$ is computed in \eqref{proofgenus2}.
\end{proof}

Throughout this paper we work with Chow groups with rational coefficients. 
We express our results in terms of the \emph{stack} classes. 

\smallskip
\emph{Acknowledgements.}
\thanks{This project was carried out at KTH Royal Institute of Technology.
The second author was supported by grant KAW 2005.0098 from the
Knut and Alice Wallenberg Foundation.
}

\section{Admissible double covers}

We begin by recalling admissible double covers. Admissible covers were introduced by Harris and Mumford in their seminal paper \cite{hamu}. The definition we give here is the one of \cite[Section 4.1]{acv}, adapted for the special case of degree-two covers.

\begin{definition} Let $C$ be a semistable curve of genus $g$. An \emph{admissible double cover} with source $C$ is the datum of a stable $n$-pointed curve $(C', x_1, \ldots, x_n)$ of genus $g'\le g$ and of a finite map $\phi\colon C \to C'$ of degree $2$ such that:
\begin{enumerate} 
\item the restriction $\phi^{sm}\colon C^{sm} \to C'^{sm}$ to the smooth locus is branched exactly at the marked points;
\item the image via $\phi$ of each node is a node.
\end{enumerate}
An \emph{admissible hyperelliptic structure} on $C$ is an admissible double cover where the genus $g'$ equals $0$, while an \emph{admissible bielliptic structure} corresponds to the case when $g'$ is $1$.
\end{definition}

By using the Riemann-Hurwitz formula and an induction on the number of nodes of $C'$, one can see that the number $n$ of marked points in the above definition must be $2g+2-4g'$. 
One can define families of admissible double covers, and isomorphisms of families of admissible double covers. In particular, we have the two moduli stacks $\overline{\mathcal{H}}_g^{adm}$ and $\overline{\mathcal{B}}^{adm}_g$ parameterizing hyperelliptic and bielliptic curves with a chosen admissible structure. They are both smooth proper Deligne-Mumford stacks, the first of dimension $2g-1$, the second of dimension $2g-2$. 

We will use the following two maps. To each family of admissible hyperelliptic covers one can associate the target family of stable genus $0$ curves together with the ordered branch divisor. This gives a map (a $\mu_2$-gerbe)
$
\overline{\mathcal{H}}_g^{adm} \to \Mmb0{2g+2}. 
$
Given a family of admissible bielliptic covers, one can forget all the extra structure besides the source family $C$ of semistable curves, and then contract all rational bridges (the rational components that intersect the closure of the complement in precisely two points). This gives a map
$
\overline{\mathcal{B}}_g^{adm} \to \Mb{g}.
$
It follows from the properness of $\overline{\mathcal{B}}_g^{adm}$ that on every family of stable curves, the locus corresponding to stable bielliptic curves forms a closed subscheme. We have thus a well-defined class \begin{displaymath}[\overline{\mathcal{B}}_g]_Q \in A^{g-1}_{\mathbb{Q}}(\Mb{g}).\end{displaymath}

\section{Loci in moduli spaces of stable pointed genus $0$ curves}
\label{loci}

The following loci in $\Mmb0{n}$ will play a central role. 

\begin{definition}
Let $n\ge3$ be an integer. Put $m=\lfloor n/2\rfloor$.
We define $I_{n}$ as the closed subscheme of $\Mm0{n}$ that parameterizes curves $(C,x_1, \ldots, x_{n})$ admitting an involution $\sigma$ whose induced permutation of the marked points is $(12)(34) \ldots (2m-1,2m)$. Let $\overline{I}_{n}$ be the closure of $I_{n}$ in $\Mmb0{n}$.
\end{definition}
Note that $\overline{I}_{n}$ has codimension $g-1$ for $n=2g+1$ or $2g+2$.
Let us now consider the invariant notion associated to the previous one.

\begin{definition} We define $I^{inv}_{n}$ as the closed subscheme in $\Mm0{n}$ that parameterizes curves $(C,x_1, \ldots, x_{n})$ such that $C$ admits
an involution that induces an action on the set of
marked points with at most one fixed point.
Let $\overline{I}^{inv}_{n}$ be the closure of $I^{inv}_{n}$ in $\Mmb0{n}$.
\end{definition}

Let us take the chance to fix the notation for certain boundary strata classes of $\Mmb0n$. 
\begin{notation} Given a partition $[n]=A_1 \sqcup A_2$ with $|A_i| \geq 2$, the general element of the divisor $(A_1|A_2)=(A_2|A_1)$ is made of genus $0$ curves with two irreducible components, one of them containing the marked points in $A_1$ and the other those in $A_2$. Given a partition $[n]=A_1 \sqcup A_2 \sqcup A_3$ with $|A_2| \geq 1$ and $|A_1|, |A_3| \geq 2$, the general element of the codimension-$2$ boundary stratum $(A_1|A_2|A_3)=(A_3|A_2|A_1)$ is made of genus $0$ curves with three irreducible components, the central one containing the marked points in $A_2$ and the extreme ones those in $A_1$ and $A_3$.
\label{notation}

We also fix the notation for the invariant boundary strata classes on $\Mmb0n$.
Given a partition $n= \lambda_1+ \lambda_2$ with $\lambda_i \geq 2$, the invariant divisor $d_{\lambda_1, \lambda_2}=d_{\lambda_2, \lambda_1}$ is the sum of the distinct divisors $(A_1|A_2)$ such that $|A_i|= \lambda_i$. Given a partition $n= \lambda_1+ \lambda_2+\lambda_3$ with $\lambda_2 \geq 1$ and $\lambda_1, \lambda_3 \geq 2$, the invariant codimension-$2$ boundary stratum $d_{\lambda_1, \lambda_2, \lambda_3}= d_{\lambda_3, \lambda_2, \lambda_1}$ is the sum of the distinct codimension-$2$ boundary strata $(A_1|A_2|A_3)$ such that $|A_i|= \lambda_i$. 
\end{notation}

\begin{figure}[ht]
\includegraphics[scale=0.4]{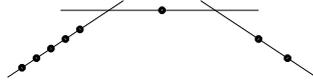}
\caption{The invariant stratum $d_{5,1,2}$ consists of $168$ strata
of this type.}
\label{d512}
\end{figure}

We now turn our attention to the genus $2$ case. Vermeire in \cite{vermeire} has computed 
\begin{equation} \label{verm}
[\overline{I}_6]=(15|2346)+ (25|1346)+ (36|1245) + (46|1235) - (56|1234) + 2 (125|346).
\end{equation}
From this, it is immediate to compute the class of $\overline{I}^{inv}_6$ in terms of the boundary divisors. Let $d_{2,4}$ and $d_{3,3}$ be the two invariant divisor classes in $\Mmb06$. The invariant divisor $\overline{I}^{inv}_6$ is the union of $15$ irreducible divisors, each of them corresponding to an element in $\Sym6$ in the conjugacy class of $(12)(34)(56)$. Now since $d_{2,4}$ is the sum of $15$ boundary divisor classes, and $d_{3,3}$ is the sum of the remaining $10$, we obtain the equality in $A^1(\Mmb06)$
\begin{equation} \label{proofgenus2}
\invtwo= 3 d_{2,4}+ 3 d_{3,3}.
\end{equation}

\section{The bielliptic locus and the invariant locus}

We consider the moduli space $\overline{\mathcal{H}}_g^{adm}$, which parameterizes admissible hyperelliptic curves. We have a diagram:
\begin{displaymath}\xymatrix{
\overline{\mathcal{H}}_g^{adm} \ar@/^15pt/^{\phi}[rr]\ar[d]^{\pi} \ar^j[r]& \overline{\mathcal{H}}_g \ar^{i}[r] & \Mb{g}\\ 
\Mmb0{2g+2}&
}
\end{displaymath}
The map $j$ forgets the structure of admissible double covers, and the representable map $i$ is a closed embedding. In particular, $j$ forgets the ordering on the ramification divisor and contracts the strictly semistable components. 
We will implicitly assume the isomorphism $\pi^*$ at the level of the Chow groups. 
 Note also that the pull-back $j^*$ is an isomorphism between the Chow groups of $\overline{\mathcal{H}}_g$ and the $\Sym{2g+2}$-invariants of the Chow groups of $\Mmb0{2g+2}$. 

\begin{proposition} \label{pullbackprop} The inverse images via $\phi$ of $\overline{\mathcal{B}}_2$ and $\overline{\mathcal{B}}_3$ are respectively $\overline{I}_6^{inv}$ and $\overline{I}_8^{inv}$. In other words, we have that $\phi^*(\bitwo)= \invtwo$, and there exists $\epsilon \in \mathbb{Z}_{>0}$ such that
\begin{displaymath}
\phi^*(\bithree)= \epsilon \cdot \invthree.
\end{displaymath}
\end{proposition}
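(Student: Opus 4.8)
The plan is to reduce the whole statement to a fact about commuting involutions on a smooth hyperelliptic curve, and then to propagate it first to the closure and finally to the level of cycles. Fix a smooth point of $\overline{\mathcal{H}}_g^{adm}$, i.e.\ a smooth hyperelliptic curve $C$ of genus $g$ with its hyperelliptic involution $\iota$; under $\phi$ it maps to $C\in\Mb{g}$, and under $\pi$ its branch locus records the $2g+2$ marked points of a curve in $\mathcal{M}_{0,2g+2}$. I would first show that $C$ is bielliptic if and only if its $2g+2$ branch points admit an involution with at most one fixed point, which for the even number $n=2g+2$ means a fixed-point-free one. For the forward direction, suppose $C$ carries a bielliptic involution $\tau$. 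Since the hyperelliptic involution of a smooth curve of genus $\ge2$ is central in $\Aut(C)$, $\tau$ commutes with $\iota$ and hence descends to an involution $\bar\tau$ of $\mathbb{P}^1=C/\iota$ permuting the $2g+2$ branch points. The key point is that no point of $C$ can be fixed by all of $\langle\iota,\tau\rangle\cong(\mathbb{Z}/2)^2$, because the stabiliser of a point on a smooth curve acts faithfully on a one-dimensional tangent space and is therefore cyclic. In particular no Weierstrass point (a fixed point of $\iota$) is fixed by $\tau$, so $\tau$ acts freely on the $2g+2$ Weierstrass points and $\bar\tau$ induces a fixed-point-free permutation of the branch points. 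This places the marked curve in $I^{inv}_{2g+2}$, i.e.\ in $\overline{I}^{inv}_8$ for $g=3$.

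For the converse I would start from a marked curve in $I^{inv}_{2g+2}$, so with an involution $\bar\tau$ of $\mathbb{P}^1$ fixing none of the branch points, and lift it to $C$. As $\bar\tau$ preserves the branch divisor it lifts to an automorphism of the double cover; the two lifts are interchanged by $\iota$, and I would take the one, say $\tau$, that is an involution commuting with $\iota$. The two fixed points of $\bar\tau$ on $\mathbb{P}^1$ are not branch points, so above each lies a free $\iota$-orbit of two points, and above each of them exactly one of the lifts $\tau,\iota\tau$ fixes both points while the other exchanges them. Hence the number of fixed points of $\tau$ is even, at most $4$, and all lie above these two points; applying Riemann--Hurwitz to $C\to C/\tau$ and imposing integrality of the quotient genus singles out, for $g=3$, a lift with exactly $4$ fixed points and quotient genus $1$ (the remaining lift having $0$ fixed points and genus $2$). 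Thus $C$ is bielliptic, and we obtain the set-theoretic equality $\phi^{-1}(\mathcal{B}_g)=I^{inv}_{2g+2}$ over the locus of smooth curves.

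I would then pass to closures. Since $\phi$ is proper and $\overline{\mathcal{B}}_g$ closed, $\phi^{-1}(\overline{\mathcal{B}}_g)$ is closed and contains $\overline{I}^{inv}_{2g+2}$; the reverse inclusion requires checking that no boundary stratum of $\overline{\mathcal{H}}_g^{adm}$ maps generically into $\overline{\mathcal{B}}_g$ beyond the components of $\overline{I}^{inv}_{2g+2}$, which is the most delicate step and calls for a direct inspection of the admissible covers lying over nodal genus-$0$ curves. Granting this, $\phi^{-1}(\overline{\mathcal{B}}_g)=\overline{I}^{inv}_{2g+2}$ as sets, and the preimage has the expected codimension $g-1$ (so there is no excess intersection, since $\dim\overline{I}^{inv}_{2g+2}=\dim\overline{\mathcal{H}}_g^{adm}-(g-1)$). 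Therefore $\phi^*([\overline{\mathcal{B}}_g]_Q)$ is represented by an honest integral codimension-$(g-1)$ cycle supported on $\overline{I}^{inv}_{2g+2}$. The entire construction is $\Sym{2g+2}$-equivariant and the irreducible components of $\overline{I}^{inv}_{2g+2}$ form a single conjugation orbit of isomorphic irreducible loci, so all of them occur with one common multiplicity $\epsilon\in\mathbb{Z}_{\ge0}$; as the intersection is nonempty and effective, $\epsilon\in\mathbb{Z}_{>0}$, giving $\phi^*(\bithree)=\epsilon\cdot\invthree$. For $g=2$ every stable curve is hyperelliptic, so $\phi$ is essentially the identification of $\Mb{2}$ with $\overline{\mathcal{H}}_2$ and the restriction is transverse, forcing $\epsilon=1$ and $\phi^*(\bitwo)=\invtwo$. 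The main obstacle is that the preimage description alone does not pin down the value of $\epsilon$ in genus $3$; this is precisely what the later test-surface computation is designed to supply.
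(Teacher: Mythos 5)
Your argument over the locus of smooth curves is correct and coincides with the paper's: the bielliptic involution descends to $\mathbb{P}^1$ because the hyperelliptic involution is central, it cannot fix a Weierstrass point (you argue via cyclicity of point stabilizers, the paper via the lifting criterion for involutions preserving the branch divisor), and conversely a fixed-point-free involution of the eight branch points lifts to an involution of $C$ which, by your Riemann--Hurwitz fixed-point count, can be chosen with quotient genus $1$. This gives $I_8^{inv}=\phi^{-1}(\mathcal{B}_3)$ and $\overline{I}_8^{inv}\subseteq\phi^{-1}(\overline{\mathcal{B}}_3)$, exactly as in the paper.

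However, the step you defer with ``Granting this'' is not a formality: it is the bulk of the paper's proof, and your formulation of what must be checked is too weak. It does not suffice to verify that no boundary \emph{stratum} of $\overline{\mathcal{H}}_3^{adm}$ is generically contained in $\phi^{-1}(\overline{\mathcal{B}}_3)$; one must also rule out extra codimension-$2$ components of the preimage whose generic point lies in the \emph{open} part of one of the three boundary divisors, i.e., one must show that the bielliptic condition cuts out a locus of codimension strictly greater than one inside each such open divisor. The paper's Claim~1 does exactly this by a case analysis of the types $d_{6,2}$, $d_{5,3}$, $d_{4,4}$ of unordered $8$-pointed genus-$0$ curves (for instance, on $d_{6,2}$ the relevant involutions do occur, but only in codimension two, and on $d_{4,4}$ with exchanged components the locus is the diagonal in $\M{0,5}\times\M{0,5}$); Claim~2 then handles the six codimension-$2$ strata of \eqref{basiscod2hyperell}. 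Without carrying out this analysis, the assertion that $\phi^{-1}(\overline{\mathcal{B}}_3)=\overline{I}_8^{inv}$ has pure expected codimension, and hence that $\phi^*(\bithree)$ is a combination of the components of $\invthree$ only, is unproved. Your remaining points (a single multiplicity $\epsilon$ by $\Sym{8}$-equivariance and conjugacy of the components, positivity of $\epsilon$, and triviality of the genus-$2$ case) agree with the paper.
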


\begin{proof} 

We study the case of genus $3$: the other case is similar and simpler. 

We start by proving that ${I}_8^{inv}=\phi^{-1}({\mathcal{B}}_3)$ and that $\overline{I}_8^{inv}$ is contained in  $\phi^{-1}(\overline{\mathcal{B}}_3)$. Let $C$ be a smooth genus $3$ curve with a hyperelliptic quotient map $\psi\colon C \to C'$.
If $C$ also admits a bielliptic involution, this descends to an involution of $C'$ because the hyperelliptic involution $\iota$ commutes with all automorphisms of $C$ (as it is the automorphism of order $2$ associated to the double cover given by the unique $g^1_2$).
Vice versa, an involution of $C'$ that preserves the branch divisor lifts to
an involution of the hyperelliptic curve if and only if it does not fix any
branch point.
From this, it follows that the bielliptic involution on
the branch locus of $\psi$ swaps the $8$ unordered points two-by-two.
This proves the equality ${I}_8^{inv}=\phi^{-1}({\mathcal{B}}_3)$;
the inclusion
$\overline{I}_8^{inv}\subseteq \phi^{-1}(\overline{\mathcal{B}}_3)$ follows
immediately.

Since the pull-back of $\bithree$ is $\Sym8$-invariant,
we conclude the proof by verifying two claims. (Note that $\epsilon$ is
the intersection multiplicity of $\overline{\mathcal{B}}_3$ and
$\overline{\mathcal{H}}_3$ along $\phi(\overline{I}_8^{inv})$; in genus~$2$,
the corresponding intersection is trivially transversal.)
Recall that $\overline{I}_{n}$ has codimension $g-1$ for $n=2g+1$ or $2g+2$ (cf.~the beginning of Section \ref{loci}).

{\bf Claim 1.}
In each of the three irreducible boundary divisors of $\overline{\mathcal{H}}_3$, consider the open locus of curves that have the minimum number of singular points. On each of these open loci, the condition of having a bielliptic structure cuts out a locus of codimension \emph{strictly greater than one}.
(By the above, one needs to check that none of the three types
$d_{6,2}$, $d_{5,3}$, and $d_{4,4}$
of stable unordered $8$-pointed curves of genus $0$ corresponding to the
codimension-$1$ boundary strata in $\Mmb08/{\Sym8}$ 
admits an involution exchanging the marked points two-by-two
in codimension at most one. The type $d_{5,3}$ never admits such an involution.
The type $d_{6,2}$ admits such an involution in codimension two, as one sees
by considering a $7$-tuple on $\mathbb{P}^1$. For $d_{4,4}$, if the components
are fixed, then the $5$-tuples on either side must be special; if the
components are exchanged, the locus is the diagonal in
$\Mm05\times\Mm05$.)

{\bf Claim 2.}
None of the six codimension-$2$ boundary strata classes of $\overline{\mathcal{H}}_3$ admits a bielliptic structure generically.
(By the above, one needs to check that none of the six types
of stable unordered $8$-pointed curves of genus $0$ corresponding to the
codimension-$2$ boundary strata in $\Mmb08/{\Sym8}$ (listed in
(\ref{basiscod2hyperell}) below) generically admits an involution exchanging
the marked points two-by-two.
For $d_{5,1,2}$, $d_{4,2,2}$, $d_{4,1,3}$, and $d_{3,3,2}$, one sees directly
that the components and hence the nodes must be fixed. In the case
$d_{4,2,2}$, the $j$-invariant of the unordered $4$-pointed
curve in the middle must be special; the other cases are obvious.
For $d_{3,2,3}$ and $d_{2,4,2}$, a similar argument applies when the components
are fixed. If the outer components are exchanged, they must have the same
$j$-invariant in the case $d_{3,2,3}$, whereas one deals with the case
$d_{2,4,2}$ by observing that a general $6$-tuple on $\mathbb{P}^1$ does not
admit a fixed-point-free involution.)
\end{proof}

We will eventually be able to prove that $\epsilon=1$ in Section \ref{testsurfaces}, by enumerating bielliptic curves on the test surface $\Sigma_8$. We remark that when the genus is higher than $3$, there are no smooth bielliptic-hyperelliptic curves. (If $g$ is the genus of such a curve $C$, the inequality
$g\ge 2(g-1)-1$ holds, since $C$ also doubly covers a curve of genus $g-1$.)
Determining stable hyperelliptic curves that admit a bielliptic involution can therefore produce several relations, inductively in $g$, among the coefficients of $[\overline{\mathcal{B}}_g]$ and $[\overline{\mathcal{H}}_g]$.

By using the previous result, we can immediately compute the class of the bielliptic locus in genus $2$. Let us take the boundary strata classes $d_{2,4}$ and $d_{3,3}$ as a basis for the $\Sym6$-invariant Picard group of $\Mmb06$. 
We have the following equalities (\cite[6.17, 6.18]{hamo}):
\begin{equation} \label{relationgenus0hyper}
\begin{cases} \phi^*\delta_0=2  d_{2,4}; \\ \phi^* \delta_1= \frac{1}{2}  d_{3,3}.
\end{cases}
\end{equation}
In view of \eqref{proofgenus2} and Proposition \ref{pullbackprop}, 
these relations give the expression for the class of the bielliptic locus 
in terms of $\delta_0$ and $\delta_1$ stated in Proposition \ref{prop2}.

\section{The class of the invariant locus in $\Mmb08$}
\label{invar}

In this section, we express the class $\invthree$ in terms of the generators of the invariant boundary strata classes in $\Mmb08$ (see Notation \ref{notation}):
\begin{equation} \label{basiscod2hyperell}
d_{5,1,2}, \quad d_{4,2,2}, \quad d_{4,1,3}, \quad d_{3,3,2}, \quad d_{3,2,3}, \quad d_{2,4,2}.
\end{equation}
We see from \cite[Theorem 5.9]{getzleroperads} that the $\Sym8$-invariant Chow group of codimension-$2$ classes in $\Mmb08$ has dimension $6$, so that these $6$ generators form a basis. We first compute the class of $\overline{I}_8$ in terms of boundary strata classes in $\Mmb08$.

Let $\pi_{1 2}$ and $\pi_{7 8}$ be the two forgetful maps from $\Mmb08$ to $\Mmb06$. The map $\pi_{1 2}$ forgets the marked points $1$ and $2$, and then renames $7,8$ to $1,2$. We have the equality \begin{equation} \label{naive} I_8= \pi_{1 2}^{-1}(I_6) \cap \pi_{7 8}^{-1}(I_6).\end{equation} Indeed, the right hand side contains curves admitting an involution $\sigma$ that permutes the last six marked points as $(34)(56)(78)$, and an involution $\tau$ that permutes the first six points as $(12)(34)(56)$. The composition $\sigma \circ \tau$ must be the identity, as it fixes $4$ points on a smooth genus $0$ curve, and this means that both $\sigma$ and $\tau$ do actually permute the eight marked points as $(12)(34)(56)(78)$.

Equality \eqref{naive} does not hold if one na\"ively puts closures on both sides; anyway, the argument above shows that $\overline{I}_8$ is an irreducible component of 
\begin{equation} \label{intersec}\pi_{1 2}^{-1}(\overline{I}_6) \cap \pi_{7 8}^{-1}(\overline{I}_6).\end{equation} 
We introduce the other components in $\Mmb08$ that are contained in this intersection.
\begin{enumerate}
\item Consider the locus $(1278|3456)$, whose generic element is a  curve with a node separating $\{1,2,7,8\}$ from $\{3,4,5,6\}$. The locus $\Div$ is the closure of the locus of curves in $(1278|3456)$ with the property that the node is invariant under the involution that exchanges the points $(34)$ and $(56)$.
\item Consider the $210$ codimension-$2$ irreducible components of $d_{2,4,2}$. Two of them have $\{1,2,7,8\}$ as marked points on the separating component and occur in \eqref{intersec}. We call the union of these strata $\typeI$.

\item Finally, consider the $280$ irreducible components of  $d_{3,2,3}$. Four of these boundary strata have the property that $\{3,4\}$ are the markings on the separating component, and $\{1,2\}$ are on two different components, and the same for $\{5,6\}$ and $\{7,8\}$. Four other ones come by exchanging $\{3,4\}$ and $\{5,6\}$ in the previous sentence. We call the union of these eight strata $\typeII$.
\end{enumerate}

\begin{figure}[ht]
\centering \footnotesize
\psfrag{1}{$1$} \psfrag{2}{$2$} \psfrag{3}{$3$} \psfrag{4}{$4$} \psfrag{5}{$5$} \psfrag{6}{$6$} \psfrag{7}{$7$} \psfrag{8}{$8$} \psfrag{I}{invariant}
\begin{tabular}{ccc}
\includegraphics[scale=0.4]{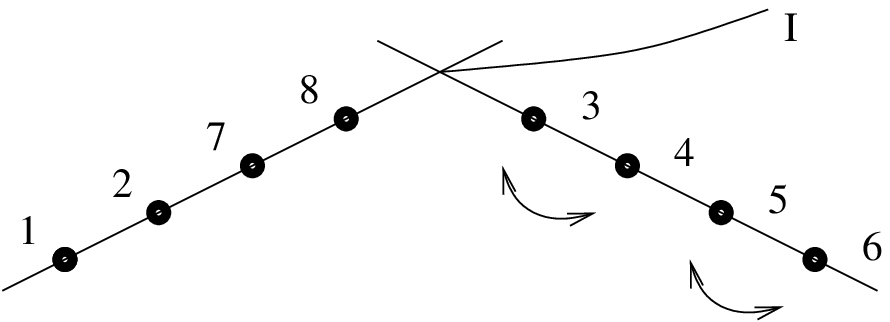} \ \ \ \ & \includegraphics[scale=0.4]{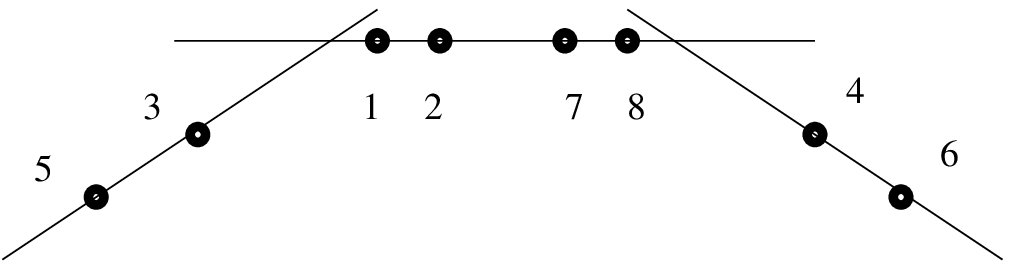} \ \ \ \ & \includegraphics[scale=0.4]{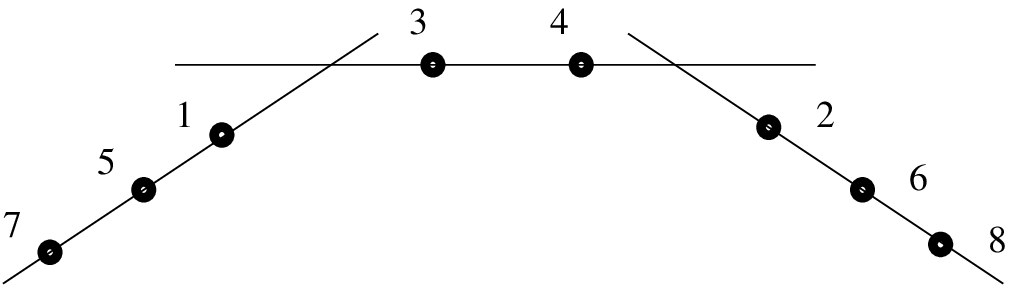}
\end{tabular}
\caption{The locus $\Div$, one component of $\typeI$, and one component of $\typeII$.}
\label{fig2}
\end{figure}
It is clear that these components are in the intersection \eqref{intersec}.
\begin{lemma}\label{vermAvermB} The following equality holds in $A^2(\Mmb08)$:
\begin{displaymath}\verma \cdot \vermb= \alpha [\overline{I}_8] + \beta [\Div]+ \gamma [\typeI] + \delta [{\typeII}].\end{displaymath}
\end{lemma}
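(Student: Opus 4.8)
The plan is to compute $\verma\cdot\vermb$ as a genuine intersection product of two effective divisors. The two forgetful maps $\pi_{12},\pi_{78}\colon\Mmb08\to\Mmb06$ are each a composition of two universal curves, hence flat of relative dimension $2$; since $\overline{I}_6$ is a reduced divisor, the flat pull-backs give $\verma=[D_1]$ and $\vermb=[D_2]$, with $D_1=\pi_{12}^{-1}(\overline{I}_6)$ and $D_2=\pi_{78}^{-1}(\overline{I}_6)$ reduced. As $\overline{I}_6$ is irreducible and the fibres of the forgetful maps are connected, the divisors $D_1$ and $D_2$ are irreducible and distinct, so they share no component. On the smooth stack $\Mmb08$ the product $[D_1]\cdot[D_2]$ is therefore supported on $D_1\cap D_2$, and, once we know that this intersection is everywhere of pure codimension $2$, the theory of proper intersections writes it as $\sum_Z m_Z[Z]$, where $Z$ runs over the irreducible components of $D_1\cap D_2$ and $m_Z\in\mathbb{Z}_{>0}$ are the local intersection multiplicities.

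The core of the argument is thus to show that the irreducible components of $D_1\cap D_2$ are exactly $\overline{I}_8$ together with the strata constituting $\Div$, $\typeI$, and $\typeII$. A point of $D_1\cap D_2$ is a stable $8$-pointed genus $0$ curve whose images under both $\pi_{12}$ and $\pi_{78}$ lie in $\overline{I}_6$. Over the open locus of irreducible curves, \eqref{naive} identifies this locus with $I_8$, contributing $\overline{I}_8$. It remains to traverse the boundary according to the dual graph of the curve and to decide, type by type, in which strata both conditions ``$\pi_{12}(C)\in\overline{I}_6$'' and ``$\pi_{78}(C)\in\overline{I}_6$'' can be imposed without forcing codimension larger than $2$. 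One checks that the only such boundary contributions are those already exhibited in \eqref{intersec}, namely $\Div$ and the strata of $\typeI$ and $\typeII$; in every other topological type at least one of the two involution conditions drops the dimension too much, so the locus has codimension at least $3$ and is not a component. Along the way one verifies that each of the four loci has codimension exactly $2$, so the intersection is proper and the formula of the previous paragraph applies.

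Granting this, the proper-intersection formula gives $\verma\cdot\vermb=\alpha[\overline{I}_8]+\beta[\Div]+\sum_i\gamma_i[Z_i]+\sum_j\delta_j[W_j]$, where the $Z_i$ and $W_j$ are the individual strata making up $\typeI$ and $\typeII$. To collapse these sums into the single classes $[\typeI]$ and $[\typeII]$ I would use symmetry: the class $\verma\cdot\vermb$ is invariant under the subgroup $G\subset\Sym8$ stabilizing both factors, that is, the subgroup preserving the pairs $\{3,4\}$ and $\{5,6\}$ and the sets $\{1,2\}$, $\{7,8\}$, and allowing the interchange of $\{3,4\}$ with $\{5,6\}$. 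This group permutes the strata inside $\typeI$ transitively, and likewise inside $\typeII$, so $G$-conjugate components carry equal multiplicities; setting $\gamma:=\gamma_i$ and $\delta:=\delta_j$ then yields the asserted identity. The numerical values of $\alpha,\beta,\gamma,\delta$ are obtained one component at a time as local intersection multiplicities of $D_1$ and $D_2$: at a general point of $I_8$ the two divisors are transverse, giving $\alpha=1$, whereas $\beta,\gamma,\delta$ require explicit local coordinates near the relevant boundary strata, where $D_1$ and $D_2$ can be tangent.

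The main obstacle is twofold. First, the completeness of the list of components rests on a careful enumeration of dual graphs, in which the two involution conditions interact in subtle ways as the curve degenerates; the real danger is to miss a deep stratum in which both conditions happen to become divisorial, or conversely to mistake the precise sub-locus—such as $\Div$, which is not a full invariant boundary stratum but a proper subdivisor of $(1278|3456)$—along which the conditions are actually satisfied. Second, the boundary multiplicities $\beta,\gamma,\delta$ cannot be read off from transversality: along $\Div$ in particular the appearance of a proper subdivisor signals a tangency of $D_1$ and $D_2$, so the corresponding coefficient must be extracted from a local length computation rather than from a naive count.
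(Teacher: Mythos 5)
Your proposal follows essentially the same route as the paper: reduce the lemma to showing that $\pi_{1 2}^{-1}(\overline{I}_6)\cap\pi_{7 8}^{-1}(\overline{I}_6)$ has pure codimension $2$ with irreducible components exactly $\overline{I}_8$ and the strata of $\Div$, $\typeI$, $\typeII$, and then invoke proper intersection on the smooth stack $\Mmb08$ to get positive integer multiplicities; your additional symmetry remark explaining why all components of $\typeI$ (resp.\ $\typeII$) carry the same coefficient is correct and is a point the paper leaves implicit. The one substantive omission is that the stratum-by-stratum verification -- ruling out every other boundary type and isolating $\Div$ inside $(1278|3456)$ -- is precisely where all the content of the lemma lies, and you only assert that ``one checks'' it; the paper carries this out explicitly (its Claims 1 and 2 in the proof: a case analysis over $d_{6,2}$, $d_{5,3}$, $d_{4,4}$ in codimension $1$ and over the codimension-$2$ strata via their images under $\pi_{1 2}$ and $\pi_{7 8}$). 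Your closing worry about tangencies forcing $\beta,\gamma,\delta>1$ is legitimate but belongs to the later lemmas, which the paper resolves by exhibiting explicit test surfaces on which the two divisors meet transversally at the relevant components.
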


We will prove at the end of this section that the coefficients $\alpha, \beta, \gamma$, and $\delta$ equal $1$. Assuming this for the moment, we have a way to express the class of $\overline{I}_8$ as an explicit linear combination of boundary strata classes in $A^2(\Mmb08)$. 
 
\begin{corollary} \label{classinvmz8} The class $\invthree$ equals:
\begin{displaymath}
\invthree= \frac{5}{2} d_{5,1,2}+ \frac{7}{4} d_{4,2,2}+ \frac{3}{4} d_{4,1,3}+ \frac{15}{4} d_{3,3,2}+ 3 d_{3,2,3} + \frac{3}{2} d_{2,4,2}.
\end{displaymath}
\end{corollary}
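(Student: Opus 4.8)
The plan is to deduce Corollary \ref{classinvmz8} directly from Lemma \ref{vermAvermB} (with $\alpha=\beta=\gamma=\delta=1$) by solving a linear equation in the invariant Chow group. The key structural input is that $\overline{I}_8$ is $\Sym8$-invariant only after symmetrization: the class $[\overline{I}_8]$ itself is \emph{not} invariant, but the class $\invthree$ is, being the sum over the $105$ fixed-point-free involutions in $\Sym8$ (the conjugacy class of $(12)(34)(56)(78)$) of the $\Sym8$-translates of $[\overline{I}_8]$, suitably normalized. So the first step is to record how $\invthree$ relates to the orbit-sum of $[\overline{I}_8]$, and similarly for the three correction loci $[\Div]$, $[\typeI]$, $[\typeII]$.

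First I would apply the $\Sym8$-averaging (symmetrization) operator to both sides of the identity in Lemma \ref{vermAvermB}. On the right, each of the four classes generates an $\Sym8$-orbit; summing over the orbit turns $[\overline{I}_8]$ into a multiple of $\invthree$ and turns $[\Div]$, $[\typeI]$, $[\typeII]$ into multiples of the invariant boundary strata $d_{2,4,2}$, $d_{2,4,2}$, and $d_{3,2,3}$ respectively. The combinatorial bookkeeping here is to count the stabilizer orders and the number of distinct strata of each type, so that the orbit-sum of a single representative is expressed as an explicit rational multiple of the corresponding invariant generator. On the left, I would symmetrize the product $\verma\cdot\vermb$; since each pullback $\pi_{12}^*[\overline{I}_6]$ and $\pi_{78}^*[\overline{I}_6]$ is only partially symmetric, I need to express the symmetrization of the product in the invariant basis \eqref{basiscod2hyperell}. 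This requires knowing $[\overline{I}_6]$ explicitly (given in \eqref{verm}), pulling it back under the two forgetful maps using the standard pullback formulas for boundary divisors under $\pi_{ij}$, multiplying the two resulting divisor classes, and then averaging. The product of two boundary divisors in $\Mmb08$ is computed by the usual excess-intersection/self-intersection rules on $\Mmb0n$ (where the self-intersection of a boundary divisor is expressed via $\psi$-classes on the two factors, which on genus-$0$ curves reduce to further boundary terms), so this is bookkeeping but not conceptually hard.

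Having both sides expressed in the invariant basis, the corollary follows by solving for $\invthree$: I would isolate $\invthree$ as
\begin{displaymath}
\invthree = c\bigl(\mathrm{Sym}(\verma\cdot\vermb) - \beta'\,d_{2,4,2} - \gamma'\,d_{2,4,2} - \delta'\,d_{3,2,3}\bigr),
\end{displaymath}
where $c$ and the primed constants are the normalization factors from the orbit counts, and then read off the six coefficients. The stated answer
\begin{displaymath}
\invthree= \frac{5}{2} d_{5,1,2}+ \frac{7}{4} d_{4,2,2}+ \frac{3}{4} d_{4,1,3}+ \frac{15}{4} d_{3,3,2}+ 3 d_{3,2,3} + \frac{3}{2} d_{2,4,2}
\end{displaymath}
should emerge once the symmetrized product is fully expanded.

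The main obstacle I anticipate is the self-intersection and excess terms in computing $\verma\cdot\vermb$. The two forgetful pullbacks share common boundary divisors, so their product is not a transverse intersection; correctly accounting for the excess contributions (via $\psi$-class / normal-bundle computations on the boundary strata) and not double-counting them against the genuine correction loci $\Div$, $\typeI$, $\typeII$ is where errors are easiest to make. A useful consistency check, which I would carry out, is to verify that the total degree of the symmetrized product matches the sum of the symmetrized contributions of $\overline{I}_8$, $\Div$, $\typeI$, $\typeII$ against a convenient test class (for instance a $\psi$-monomial or a zero-dimensional boundary stratum), thereby confirming both the coefficients $\alpha=\beta=\gamma=\delta=1$ and the final expansion simultaneously.
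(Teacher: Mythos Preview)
Your overall strategy coincides with the paper's: rearrange Lemma~\ref{vermAvermB} (with $\alpha=\beta=\gamma=\delta=1$) as
\[
[\overline{I}_8]=\verma\cdot\vermb-[\Div]-[\typeI]-[\typeII],
\]
express every term in boundary, and then symmetrize. The order in which you symmetrize is immaterial, and your description of the left-hand side computation and of $[\typeI]$, $[\typeII]$ is correct.

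There is, however, a real gap in your treatment of $[\Div]$. You assert that the $\Sym8$-orbit of $[\Div]$ is a multiple of $d_{2,4,2}$, but $\Div$ is \emph{not} a codimension-$2$ boundary stratum at all. By definition it is the closure, inside the boundary \emph{divisor} $(1278\,|\,3456)$, of the locus where the node lies at a fixed point of the involution $(34)(56)$ acting on the component carrying $3,4,5,6$. A generic point of $\Div$ therefore parameterizes a curve with a \emph{single} node, so $\Div$ is not contained in any stratum of type $d_{\lambda_1,\lambda_2,\lambda_3}$, and its orbit-sum cannot possibly be a scalar multiple of one invariant stratum. Concretely, the paper identifies $[\Div]$ with the push-forward under the gluing map $\Mmb{0}{5}\times\Mmb{0}{5}\to\Mmb{0}{8}$ of the class $\psi_\bullet$ on the factor with markings $\{3,4,5,6,\bullet\}$; writing $\psi_\bullet$ in boundary on $\Mmb{0}{5}$ gives
\[
[\Div]=(1278\,|\,5\,|\,346)+(1278\,|\,6\,|\,345)+(1278\,|\,56\,|\,34),
\]
so after symmetrizing $[\Div]$ contributes to $d_{4,1,3}$ and $d_{4,2,2}$, not to $d_{2,4,2}$. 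Your displayed formula for $\invthree$ with correction terms only in $d_{2,4,2}$ and $d_{3,2,3}$ would therefore produce wrong coefficients for $d_{4,1,3}$, $d_{4,2,2}$, and $d_{2,4,2}$. The step you flag as the main obstacle (expanding $\verma\cdot\vermb$) is indeed tedious but routine; the step you are missing --- recognizing that $[\Div]$ requires its own boundary expression --- is the genuinely nontrivial ingredient.
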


\begin{proof} In Lemma \ref{vermAvermB} we have expressed $[\overline{I}_8]$ in terms of the other classes; we will prove that the coefficients $\alpha, \beta, \gamma, \delta$ are all equal to $1$, see Lemmas \ref{alpha},  \ref{gammadelta}, and \ref{beta}. So let us say how one can express all other classes in terms of boundary.

\begin{enumerate}
\item By pulling back equality \eqref{verm}, it is not difficult to express $\verma$ and $\vermb$ in terms of boundary strata classes in $\Mmb08$. It is then lengthy but straightforward to express the product of the latter classes in terms of boundary. 

\item Let us study the class of $\Div$. On $\Mmb05$ with marked points $\{3,4,5,6, \bullet\}$, there is a divisor corresponding to the condition of $\bullet$ being fixed by the involution $(3 4)(5 6)$. The class of the latter divisor, after identifying $\Mmb05$ with the blow-up of $\mathbb{P}^2$ in four general points, is the proper transform (or pull-back) of the class of a line 
(namely, the line fixed pointwise by
the corresponding involution of $\mathbb{P}^2$ exchanging the four points
two-by-two).
Its class is therefore equal to $\psi_{\bullet}=(346) + (345) + (34)$. The class of $\Div$ is the push-forward of the class of this locus under the map that glues this $\Mmb05$ to the $\Mmb05$ with marked points $\{1,2,7,8, \star\}$:
\begin{displaymath} \begin{matrix}[\Div]=(1278|5|346)+(1278|6|345)+(1278|56|34). \\
\end{matrix}\end{displaymath}

\item The loci $\typeI$ and $\typeII$ are already boundary.

\end{enumerate}

Once this is settled, the class of $\overline{I}_8^{inv}$ can be computed in terms of the invariant classes 
$d_{5,1,2}$, $d_{4,3,3}$, $d_{4,1,3}$, $d_{3,3,2}$, $d_{3,2,3}$, 
and $d_{2,4,2}$ 
by symmetrizing, similarly to what was done in \eqref{proofgenus2}. The inverse image of the locus $I_8^{inv}$ in $\Mmb08$ is the union of $105$ irreducible components, each of them corresponding to an element of $\Sym8$ in the conjugacy class of $(12)(34)(56)(78)$. The numbers of irreducible components in $\Mmb08$ of the invariant loci are
$168$, $420$, $280$, $560$, $280$, and $210$, respectively.
\end{proof}

\noindent {\em Proof} (of Lemma \ref{vermAvermB}).
We want to show that
\begin{displaymath}\pi_{1 2}^{-1}(\overline{I}_6) \cap \pi_{7 8}^{-1}(\overline{I}_6)= \overline{I}_8 \cup \Div \cup \typeI \cup \typeII.\end{displaymath}
 That the right hand side is included in the left hand side is a straightforward check. To prove the other inclusion, we consider the stratification of $\Mmb08$ given by the number of nodes. 
As observed in \eqref{naive}, the restriction of the left hand side to the open part $\Mm08$ is precisely $I_8$. We conclude the proof by verifying two claims.  Recall that $\overline{I}_{n}$ has codimension $g-1$ for $n=2g+1$ or $2g+2$ (cf.~the beginning of Section \ref{loci}).

{\bf Claim 1.}
Among the boundary divisors of $\Mmb08$, only one has the property that $\pi_{1 2}^{-1}(\overline{I}_6)\cap\pi_{7 8}^{-1}(\overline{I}_6)$ cuts a codimension-$1$ locus on the open part of the divisor that parameterizes curves with precisely one singular point. This boundary divisor is $(1278|3456)$, and $\pi_{1 2}^{-1}(\overline{I}_6)\cap\pi_{7 8}^{-1}(\overline{I}_6)$ cuts out in it precisely the locus $\Div$.
(Clearly, the divisors of type $d_{5,3}$ do not contribute at all.
As to type $d_{6,2}$, the intersection is empty unless the $2$ points
form a pair $\{2i-1,2i\}$, in which case it is of codimension $2$.
For $d_{4,4}$, the components can be exchanged
or fixed. It suffices to consider $(1357|2468)$ in the former case.
Two different cross-ratios on the odd side must equal their counterparts
on the even side, yielding a codimension-$2$ locus. In the latter case,
it suffices to consider $(1234|5678)$, yielding the
codimension-$2$ locus where the nodal points on \emph{both} components must be
fixed by the induced involutions, and $(1278|3456)$, yielding the
codimension-$1$ locus where only the nodal point on the \emph{right} component
must be fixed.)

{\bf Claim 2.}
The only boundary strata classes of codimension $2$ in $\Mmb08$ that are included in $\pi_{1 2}^{-1}(\overline{I}_6)\cap\pi_{7 8}^{-1}(\overline{I}_6)$ are precisely those in $\typeI$ and $\typeII$.
(The images under both $\pi_{1 2}$ and $\pi_{7 8}$ of such boundary strata
must be boundary strata of $\Mmb06$ entirely contained in $\overline{I}_6$,
therefore necessarily of codimension $2$, of type $d_{2,2,2}$, with 
one of the pairs $\{2i-1,2i\}$ in the middle and the outer components
switched by the involution $(12)(34)(56)$. The claim is now easily verified
by adding the points $7$ and $8$ in all possible ways to such a stratum
and then applying the map $\pi_{1 2}$.)
\hfill\qed
\medskip

We now prove the equalities
\begin{displaymath}
\alpha=\beta=\gamma=\delta=1,
\end{displaymath}
for the coefficients that appear in Lemma \ref{vermAvermB}. This is needed to complete the proof of Corollary \ref{classinvmz8}. We have to compute multiplicities of intersections inside $\Mmb08$. 

\begin{lemma} \label{alpha} The coefficient $\alpha$ in Lemma \ref{vermAvermB} is $1$.
\end{lemma}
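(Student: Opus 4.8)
The plan is to show that the intersection multiplicity $\alpha$ of the cycle $\verma \cdot \vermb$ along its component $\overline{I}_8$ equals $1$ by verifying that the two pulled-back divisors $\pi_{12}^{-1}(\overline{I}_6)$ and $\pi_{78}^{-1}(\overline{I}_6)$ meet transversally along a generic point of $\overline{I}_8$. Since $\overline{I}_8$ is a genuine irreducible component of the intersection and has the expected codimension $2$, the coefficient $\alpha$ records precisely this local intersection multiplicity, so it suffices to work at a single general point of $\overline{I}_8$ and analyze the tangent spaces of the two divisors there.

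First I would fix a general point $[C]$ of $I_8 = \overline{I}_8 \cap \Mm08$, that is, a smooth genus-$0$ curve with eight marked points carrying the fixed-point-free involution $\sigma$ acting as $(12)(34)(56)(78)$; by genericity I may assume $C$ has no automorphisms beyond $\sigma$ and that the point lies in the smooth locus of all relevant divisors. Working in $\Mm08 \cong M_{0,8}$, I would use the standard cross-ratio coordinates to write down explicit local equations for each of the two divisors near $[C]$: the divisor $\pi_{12}^{-1}(\overline{I}_6)$ is cut out near $[C]$ by the single condition (from the Vermeire class \eqref{verm}, pulled back along $\pi_{12}$) that the six points $x_3,\dots,x_8$ admit an involution acting as $(34)(56)(78)$, and symmetrically $\pi_{78}^{-1}(\overline{I}_6)$ is cut out by the condition that $x_1,\dots,x_6$ admit an involution acting as $(12)(34)(56)$. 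Each condition is a single equation in the local coordinates, so transversality amounts to checking that their two differentials are linearly independent at $[C]$.

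The key computation is thus to produce the two defining functions explicitly and compare their linearizations at the point fixed by $\sigma$. The natural way to do this is to choose a normalization of $\mathbb{P}^1$ in which $\sigma$ is the standard involution $z \mapsto -z$, place the pairs symmetrically as $\{a_i, -a_i\}$, and let the deformation parameters be the individual displacements of the eight points; the condition defining $\pi_{12}^{-1}(\overline{I}_6)$ then becomes a single cross-ratio equation involving $x_3,\dots,x_8$ only, and its differential is supported on those displacement directions, while the differential of the other equation is supported on $x_1,\dots,x_6$. Because the two equations involve genuinely different coordinate directions (one ignores the displacements of $1,2$, the other ignores those of $7,8$), their gradients cannot be proportional, giving linear independence and hence transversality.

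The main obstacle I expect is bookkeeping rather than conceptual: one must be careful that the "single condition" cutting out each divisor is really the vanishing of one reduced local equation (so that $\pi_{12}^{-1}(\overline{I}_6)$ itself appears with multiplicity one in each factor, as it must since $\overline{I}_6$ is reduced and $\pi_{12}$ is a smooth morphism), and one must confirm that at the general point of $\overline{I}_8$ no other component of the intersection passes through, so that the local ring computation genuinely isolates the contribution along $\overline{I}_8$. Once the transversality of the two smooth divisors at a general point of their common component $\overline{I}_8$ is established, Fulton's intersection theory gives intersection multiplicity $1$, i.e.\ $\alpha = 1$.
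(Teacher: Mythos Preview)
Your approach is correct in spirit and arrives at the same conclusion as the paper by the same underlying mechanism: both arguments exhibit explicit local equations for the two divisors and observe that the resulting scheme-theoretic intersection is reduced at a generic point of $\overline{I}_8$. The paper packages this more efficiently by invoking Kapranov's blow-down $\Mmb08\to\mathbb{P}^5$ and Vermeire's identification of $\overline{I}_6$ with the proper transform of a quadric: on a Zariski open set of $\mathbb{P}^5$ the two pulled-back divisors are cut out by $x_0x_1-x_2x_3$ and $x_0x_1-x_4x_5$, which visibly define a reduced complete intersection. Your cross-ratio computation, once actually carried out (for instance normalizing $x_5=\infty$, $x_6=0$, $x_3=1$, so that the two conditions become $x_7x_8=x_4$ and $x_1x_2=x_4$), produces essentially the same pair of equations.

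One point in your write-up needs tightening. The sentence ``because the two equations involve genuinely different coordinate directions \dots\ their gradients cannot be proportional'' is not valid as stated: the supports of the two gradients overlap on the directions corresponding to $x_3,\dots,x_6$, so disjointness of the \emph{forgotten} directions alone does not preclude proportionality. What actually settles it is the explicit computation: in the coordinates above, the gradients are $(0,0,-1,x_8,x_7)$ and $(x_2,x_1,-1,0,0)$, and proportionality would force $x_7=x_8=x_1=x_2=0$, contradicting distinctness of the marked points. You should also be careful that $\Mm08$ is $5$-dimensional, so ``displacements of the eight points'' must be taken modulo $\mathrm{PGL}_2$; your normalization $\sigma\colon z\mapsto -z$ already uses up this freedom, leaving a genuine $5$-parameter chart. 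With these adjustments your argument goes through and is equivalent to the paper's.
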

\begin{proof} We want to show that the intersection of $\verma$ and $\vermb$ has generically a reduced scheme structure. 

We recall a description of $\overline{I}_6$ due to Vermeire in \cite{vermeire} that uses Kapranov's description of $\Mmb06$. From Kapranov's construction, there is a blow-down map $\Mmb06 \to \mathbb{P}^3$. Vermeire has proved that $\overline{I}_6$ is the proper transform of the divisor $x_0 x_1 - x_2 x_3$.

Similarly, there is a blow-down map $\Mmb08 \to \mathbb{P}^5$. After restriction to a Zariski open subset $U$ of $\mathbb{P}^5$, $\verma$ is the proper transform of $x_0 x_1- x_2 x_3$ and $\vermb$ the proper transform of $x_0 x_1- x_4 x_5$. The two equations define a reduced subscheme of $U$.
\end{proof}

 For the remaining coefficients, we construct test surfaces for $\Mmb08$, and see that $\verma$ and $\vermb$ intersect transversely on them.

\begin{lemma} \label{gammadelta} The coefficients $\gamma$ and $\delta$ in Lemma \ref{vermAvermB} are both $1$.
\end{lemma}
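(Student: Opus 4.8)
The plan is to compute the local intersection multiplicities of $\verma$ and $\vermb$ along generic points of $\typeI$ and $\typeII$ respectively, and show that both equal $1$. Since we already know from Lemma \ref{vermAvermB} that the intersection equals $\alpha[\overline{I}_8]+\beta[\Div]+\gamma[\typeI]+\delta[{\typeII}]$ as cycles, the coefficients $\gamma$ and $\delta$ are precisely these local multiplicities, so it suffices to verify transversality of the two pullbacks $\verma$ and $\vermb$ at a general point of each of these two boundary strata. First I would fix a general point of a component of $\typeI$, that is, a curve of type $d_{2,4,2}$ with $\{1,2,7,8\}$ on the separating (central) component and the involution $(12)(34)(56)(78)$ exchanging the two outer components, each carrying a pair among $\{3,4\}$ and $\{5,6\}$. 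I would then choose explicit local analytic coordinates on $\Mmb08$ near this point, using the two smoothing parameters of the two nodes together with coordinates on the product of the relevant lower-dimensional moduli factors.

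The key computation is to express, in these local coordinates, the two divisors $\verma$ and $\vermb$ near the chosen point and check that their local equations cut out a reduced, transverse intersection. Concretely, $\verma$ is the locus where the $6$-pointed curve obtained by forgetting $1,2$ (and renaming $7,8$ to $1,2$) lies in $\overline{I}_6$, and similarly for $\vermb$ forgetting $7,8$. At a general point of $\typeI$, forgetting the relevant pair of markings contracts the corresponding outer bubble or rational bridge and produces a genuine point of $\overline{I}_6$; the two conditions should involve complementary sets of the local parameters, so that their product is a local equation of $\typeI$ with multiplicity one. I expect the cleanest way to organize this is via Kapranov/Vermeire's description recalled in Lemma \ref{alpha}: near the generic point of each stratum one restricts to an affine chart where $\verma$ and $\vermb$ pull back to (proper transforms of) the two quadrics $x_0x_1-x_2x_3$ and $x_0x_1-x_4x_5$, and one checks directly that these meet transversely along the locus in question.

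For $\typeII$, the configuration is a curve of type $d_{3,2,3}$: the central component carries one of the pairs, say $\{3,4\}$, while $\{1,2\}$ sit on distinct outer components and $\{7,8\}$ likewise, with the involution $(12)(34)(56)(78)$ switching the two outer components. I would treat this case by the same strategy, setting up local coordinates adapted to the two nodes and the moduli of the three-component configuration, and then verifying that the pullbacks of the two Vermeire quadrics again meet transversely along $\typeII$. The bookkeeping differs because here both the forgetful map $\pi_{12}$ and $\pi_{78}$ contract different outer components, so one must be careful that the two conditions remain independent in the local ring.

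The main obstacle will be the explicit local analysis at these boundary points: one must correctly identify how the forgetful maps $\pi_{12}$ and $\pi_{78}$ act on the smoothing parameters and on the moduli coordinates near a degenerate configuration, and confirm that the two resulting local equations form a regular sequence with reduced quotient. In particular, the delicate point is ruling out any tangency or embedded multiplicity coming from the fact that both conditions share the factor $x_0x_1$ in the quadric description — one must check that the \emph{proper transforms}, rather than the total transforms, meet transversely along $\typeI$ and $\typeII$, which is exactly where the blow-up structure of $\Mmb08$ over $\mathbb{P}^5$ must be used with care.
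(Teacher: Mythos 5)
Your overall framework is sound: since $\verma$ and $\vermb$ meet in pure codimension $2$, the coefficients $\gamma$ and $\delta$ are indeed the intersection multiplicities along the generic points of $\typeI$ and $\typeII$, so exhibiting transversality at a general point of each stratum would suffice. But the proposal stops exactly where the proof has to begin: you never actually write down the local equations of the two divisors at a general point of $\typeI$ or $\typeII$, and the route you suggest for doing so does not work as stated. The open-chart argument of Lemma \ref{alpha} (where $\verma$ and $\vermb$ become the quadrics $x_0x_1-x_2x_3$ and $x_0x_1-x_4x_5$ on a Zariski open subset of $\mathbb{P}^5$) is available only away from the centers of the Kapranov blow-down; the strata $\typeI$ and $\typeII$ are codimension-$2$ \emph{boundary} strata and lie inside the exceptional locus, which is precisely why the paper cannot reuse the $\alpha$-argument for $\gamma$, $\delta$, and $\beta$. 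You flag this yourself in the last paragraph (``one must check that the proper transforms, rather than the total transforms, meet transversely''), but flagging the difficulty is not the same as resolving it: the content of the lemma \emph{is} that resolution, and as written your argument reduces to ``I expect the two conditions to be independent.''

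The paper avoids the local analysis entirely by a global computation on an explicit test surface: it glues two one-parameter families of $5$-pointed rational curves (parameters $\lambda$ and $\mu$ for the moving points $3$ and $4$) to get a surface $\mathbb{P}^1_\lambda\times\mathbb{P}^1_\mu\to\Mmb{0}{8}$ whose general member is a two-component curve separating odd from even markings. On this surface $\verma$ and $\vermb$ become the bidegree-$(1,1)$ curves $3\lambda=2\mu$ and $\lambda=\mu$, which meet set-theoretically in exactly the two points $\lambda=\mu=0$ (lying on $\typeI$) and $\lambda=\mu=\infty$ (lying on $\typeII$); since $(1,1)\cdot(1,1)=2$, both local multiplicities are forced to be $1$, giving $\gamma=\delta=1$ in one stroke. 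If you want to salvage your approach, you would need to carry out the deformation-theoretic computation in earnest: choose coordinates given by the two node-smoothing parameters and the cross-ratios on the components, determine how $\pi_{12}$ and $\pi_{78}$ act on them, and verify that the pullbacks of the local equation of $\overline{I}_6$ (near the relevant boundary point of $\Mmb{0}{6}$, again not a general point of $\mathbb{P}^3$) form a regular sequence. That is doable but substantially more work than the paper's one-line test-surface computation.
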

\begin{proof}
We construct a test surface over $\mathbb{P}^1 \times \mathbb{P}^1$. The general fiber is a genus $0$ stable pointed curve with one node, which separates the odd markings from the even ones. The points $3$ and $4$ vary.

More precisely, we define two test $\mathbb{P}^1$'s in $\Mmb05$; their product is a test surface in $\Mmb05\times\Mmb05$ (with marked points $\{1,3,5,7, \star\}$, respectively $\{2,4,6,8, \bullet\}$) that yields a test surface in $\Mmb08$ by gluing the two last marked points.

In $\Mm04$ we fix the point $p$ that corresponds to:
\begin{displaymath}
5 \to 0, \quad \star \to \infty, \quad 1 \to 1, \quad 7 \to 2.
\end{displaymath}
The first $\mathbb{P}^1$ is obtained as the fiber over $p$ of the map $\Mmb05\to\Mmb04$ forgetting the last point (marked points $\{1,5,7,\star,3\}$), under the natural identification of this map with the universal curve. We call $\lambda$ the free parameter on this $\mathbb{P}^1$, corresponding to the marked point $3$.

The second $\mathbb{P}^1$ is constructed similarly, but starting from the point in $\Mm04$ that corresponds to:
\begin{displaymath}
6 \to 0, \quad \bullet \to \infty, \quad 2 \to 1, \quad 8 \to 3. 
\end{displaymath}
We call $\mu$ the free parameter on the second $\mathbb{P}^1$, corresponding to the marked point $4$.

The two divisors $\verma$ and $\vermb$ define two divisors on the test surface. The divisor $\verma$ imposes the condition that the quadruples $5,\star, 7, 3$ and $6, \bullet, 8, 4$ define the same point on $\Mmb04$. Thus it is given by the equation
$3\lambda=2\mu$ on $\mathbb{P}^1_{\lambda} \times \mathbb{P}^1_{\mu}$.
The divisor $\vermb$ imposes instead that the quadruples $5, \star, 1, 3$ and $6, \bullet, 2, 4$ define the same point on $\Mmb04$. Thus it corresponds to the equation
$\lambda= \mu$ on $\mathbb{P}^1_{\lambda} \times \mathbb{P}^1_{\mu}$.
The set-theoretic intersection is therefore in $\lambda=\mu=0$ and $\lambda=\mu=\infty$. The solution $\lambda=\mu=0$ corresponds to a curve 
that is a chain of four components, with distribution $(35|17|28|46)$
of the marked points ($\typeI$). The second solution corresponds also
to a chain of four components, with distribution $(157|3|4|268)$ of the marked points ($\typeII$).
The fact that the two equations have bidegree $(1,1)$ is enough to establish that both intersection multiplicities are $1$.
\end{proof}

\begin{lemma} \label{beta} The coefficient $\beta$ in Lemma \ref{vermAvermB} is $1$.
\end{lemma}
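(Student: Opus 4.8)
The plan is to determine $\beta$, the multiplicity of $\Div$ in the product $\verma\cdot\vermb$, as a local intersection number on a test surface transverse to $\Div$, in the spirit of Lemma~\ref{gammadelta}. Since $\beta$ records the behaviour of the two divisors in the direction normal to the boundary divisor $(1278|3456)$, the surface must leave that boundary divisor, so a node-smoothing parameter has to enter as one of its coordinates; this is what distinguishes the present computation from the one in Lemma~\ref{gammadelta}, which stayed inside a single boundary divisor.

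A general point of $\Div$ is a two-component curve $C_L\cup C_R$: the component $C_L$ carries the generic markings $\{1,2,7,8\}$ and the node $\star$, while $C_R\cong\mathbb{P}^1_y$ carries $\{3,4,5,6\}$ and the node $\bullet$, with $\bullet$ sitting at a fixed point of the unique involution $\iota_R$ of $C_R$ realizing $(34)(56)$. Normalizing $\iota_R\colon y\mapsto -y$ and placing $3,4,5,6$ at $\pm p,\pm q$, I would build the test surface $\Sigma$ by keeping $C_L$ and the four points $3,4,5,6$ fixed and varying the two transverse parameters: the position $s$ of the node $\bullet$ on $C_R$ (so that $\Div=\{s=0\}$ inside the boundary divisor) and the smoothing parameter $t$. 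For generic fixed data, $\Sigma$ meets $\Div$ transversally in the single point $p=(t,s)=(0,0)$. Moreover the curves on $\Sigma$ have at most one node, and a general point of $\Div$ has $\star$ \emph{not} fixed by the involution $(12)(78)$ of $C_L$, so near $p$ the surface avoids $\overline{I}_8$, $\typeI$, and $\typeII$.

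The heart of the argument is to read off the two conditions on $\Sigma$. Gluing $\star$ (at $x=\infty$ on $C_L$) to $\bullet$ (at $y=s$) via $uv=t$ places a marked point with $C_L$-coordinate $x_i$ at $y_i=s+t\,x_i$ on the smoothed fibre. On each such fibre the involution realizing $(34)(56)$ is rigid: since $3,4,5,6$ sit at $\pm p,\pm q$, it must be $\iota_R\colon y\mapsto -y$. Hence the $\verma$-condition is $\iota_R(7)=8$, that is $2s+(x_7+x_8)\,t=0$, and the $\vermb$-condition is $\iota_R(1)=2$, that is $2s+(x_1+x_2)\,t=0$. For a generic choice with $x_1+x_2\neq x_7+x_8$ these are two distinct (reduced) lines through the origin of $\Sigma$, so they meet transversally at $p$. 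Therefore the local intersection number of $\verma|_\Sigma$ and $\vermb|_\Sigma$ at $p$ equals $1$; as $\Sigma$ is transverse to $\Div$ at $p$ and meets no other component of $\verma\cap\vermb$ there, this forces $\beta=1$.

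The main obstacle is precisely the rigidity claim that makes both conditions linear with \emph{different} slopes $-(x_7+x_8)/2$ and $-(x_1+x_2)/2$. One must resist the tempting but here invalid deformation-theoretic shortcut that the automorphism of the central fibre automatically extends over the node: that extension describes only the $\iota_R$-symmetric family $x_2=-x_1,\ x_8=-x_7$, which is non-generic, whereas the true slopes depend on the positions of the surviving pairs $\{7,8\}$ and $\{1,2\}$ and are exactly what renders the two branches transverse at $p$.
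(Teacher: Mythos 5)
Your argument is correct, and it follows the same overall template as the paper's proof (and as that of Lemma~\ref{gammadelta}): restrict $\verma$ and $\vermb$ to an explicit two-parameter family, write the two conditions as plane curves, and check that they meet transversally at a point of $\Div$ through which no other component of the intersection passes. The test surface itself, however, is genuinely different. The paper's surface stays entirely inside a boundary divisor: all fibres are one-nodal of type $(13456|278)$, with $3,4,5,6$ fixed at $0,\infty,1,4$ and the points $\bullet$ and $1$ varying, so that $\verma$ and $\vermb$ become $\lambda^2=4$ and $\lambda\mu=4$; these meet transversally at $(\pm2,\pm2)$, which are points of $\Div$ lying in a deeper stratum (where $1$ has collided with the node). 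You instead centre the surface at a \emph{general} point of $\Div$ and use the node-smoothing parameter $t$ together with the position $s$ of the node on the $\{3,4,5,6\}$-component as coordinates, so the two conditions become the distinct lines $2s+(x_7+x_8)t=0$ and $2s+(x_1+x_2)t=0$. Your choice buys immediate transversality to $\Div$ and automatic avoidance of $\overline{I}_8$, $\typeI$, and $\typeII$ (the latter two consist of two-nodal curves), at the cost of justifying the plumbing formula $y_i=s+tx_i$; the paper's choice avoids the smoothing parameter altogether but must identify its two intersection points as lying on $\Div$ and on nothing else. Your emphasis on the rigidity of the involution is exactly the right point: on every nearby smooth fibre the involution is pinned down by $3,4,5,6$ to be $y\mapsto-y$, and the two slopes differ precisely because the pairs $\{7,8\}$ and $\{1,2\}$ sit at unrelated positions on the other component.
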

\begin{proof}
We construct the following test surface. Fix $4$ distinct points $(3,4,5,6)$ on a smooth genus $0$ curve $C$, and let two points $1, \bullet$ vary on it. This defines a test surface for $\Mmb06$ (markings $\{1,3,4,5,6, \bullet\})$. This also gives a test surface for $\Mmb08$ once a choice of $4$ distinct points  $(2,7,8, \star)$ is fixed on a smooth genus $0$ curve, by gluing $\bullet$ with $\star$.

We fix an isomorphism of $C$ with $\mathbb{P}^1$ in such a way that \begin{displaymath}3 \to 0 \quad 4 \to \infty, \quad 5 \to 1,\quad 6 \to 4, \quad \bullet \to \lambda, \quad 1 \to \mu.\end{displaymath} On this test surface, with this choice of coordinates, $\verma$ is given by the equation $\lambda^2 =4$ (the involution
$x\mapsto4/x$ must fix $\lambda$)
and $\vermb$ is given by $\lambda\mu=4$ (the same involution
must switch $\lambda$ and $\mu$).
They clearly intersect transversely in 
the points $(2,2)$ and $(-2,-2)$, both belonging to $\Div$.
\end{proof}

\section{Pulling back from $\Mb3$ to the hyperelliptic locus}
\label{Pulling}

In this section we study the linear map
\begin{equation} \label{pullcod2}
\phi^*\colon A^2(\Mb3) \to A^2(\overline{\mathcal{H}}_3^{adm})^{\Sym8}. 
\end{equation}
We have fixed \eqref{basiscod2} as the basis in the domain, and  
\eqref{basiscod2hyperell} as the basis in the image.

In the lemma below, we will need explicit expressions for certain invariant tautological classes in $\Mmb08$ in terms of invariant boundary strata classes. The computations necessary for this were done using \cite{fabermaple}. 

Recall the Arbarello-Cornalba ${\kappa}$ classes:
\begin{displaymath}
{\kappa_i}:= \pi_*\left(c_1(\omega_{\pi}(D))^{i+1}\right)
\end{displaymath}
where $\pi$ is the universal curve over $\Mmb08$, $\omega_{\pi}$ is the relative dualizing sheaf, and $D$ is the divisor corresponding to the $8$ disjoint sections in the universal curve. Another useful invariant class will be the codimension-$j$ class $\tilde{\psi}_j:= \sum_{i=1}^8 \psi_i^j$, where, as usual, $\psi_i$ is the first Chern
class of the cotangent line bundle at the $i$th marked point.

In codimension $1$ we have (observe that ${\kappa_1}= \tilde{\psi_1}- d_{2,6} - d_{3,5} - d_{4,4}$): 
\begin{equation} \begin{cases}\label{cod1} {\kappa_1}=& \frac{5}{7} d_{2,6} + \frac{8}{7} d_{3,5} + \frac{9}{7} d_{4,4}, \\
\tilde{\psi_1}=& \frac{12}{7} d_{2,6} + \frac{15}{7} d_{3,5} + \frac{16}{7} d_{4,4}.
\end{cases}
\end{equation}
In codimension $2$, we obtain:
\begin{equation} \label{cod2}
\begin{cases}
{\kappa_2}= & \frac{1}{7} d_{5,1,2} + \frac{1}{7} d_{4,2,2} + \frac{6}{35} d_{4,1,3} + \frac{1}{10} d_{3,3,2} + \frac{6}{35} d_{3,2,3} + \frac{1}{21} d_{2,4,2}, \\
\tilde{\psi_2}= &\frac{11}{21} d_{5,1,2} + \frac{16}{35} d_{4,2,2} + \frac{3}{7} d_{4,1,3} + \frac{3}{10} d_{3,3,2} + \frac{3}{7} d_{3,2,3} + \frac{16}{105} d_{2,4,2}. \\
\end{cases}
\end{equation}
Finally, we can express the products of the invariant codimension-$1$ classes in terms of the invariant codimension-$2$ classes:
\begin{equation} \label{cod1cod2} \begin{cases}
d_{2,6}^2&=  - \frac{2}{3} d_{5,1,2}- \frac{2}{5} d_{4,2,2} - \frac{1}{5} d_{3,3,2} + \frac{28}{15} d_{2,4,2} ,\\
d_{2,6} d_{3,5}&=  d_{5,1,2} + d_{3,3,2} ,\\
d_{2,6} d_{4,4}&=  d_{4,2,2} ,\\
d_{3,5}^2&= -\frac{1}{3} d_{5,1,2} - \frac{3}{5} d_{4,1,3} - \frac{1}{10} d_{3,3,2} + \frac{7}{5} d_{3,2,3} ,\\
d_{3,5} d_{4,4}&= d_{4,1,3},\\
d_{4,4}^2&=  - \frac{1}{6} d_{4,2,2} - \frac{1}{2} d_{4,1,3}.\\
\end{cases}
\end{equation}
We are now in a position to compute the matrix associated to $\phi^*$. 
\begin{lemma} \label{matrix} The following $7 \times 6$ matrix is associated to $\phi^*$ in the bases \eqref{basiscod2} and \eqref{basiscod2hyperell}$\colon$
\begin{displaymath} 
\renewcommand{\arraystretch}{1.2}
\begin{pmatrix}
\frac{1}{42} &\frac{19}{210}&\frac{1}{35}&\frac{1}{20}&
      \frac{3}{35}&\frac{1}{35} \\
      
      0&\frac{11}{15}&0&\frac{1}{5}&0& \frac{4}{5}\\
      
      \frac{1}{12}&0&\frac{1}{10}&\frac{1}{10}&\frac{1}{10}&0\\
      
      -\frac{8}{3}&\frac{86}{15}&-2&-\frac{4}{5}&0 &\frac{112}{15}\\
      
      1& 0 & 1 & 1 & 0 &0 \\
      
      -\frac{1}{12}&0&- \frac{3}{20} & - \frac{1}{40}& \frac{7}{20} & 0 \\
      
      \frac{13}{84}& \frac{6}{35} & \frac{33}{140} & \frac{1}{8} & \frac{33}{140} & \frac{2}{35}
\end{pmatrix}
\end{displaymath}
\end{lemma}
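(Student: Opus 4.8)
The plan is to exploit that $\phi^*$ is a ring homomorphism, so the whole matrix is determined by the images of the three codimension-one generators $\lambda,\delta_0,\delta_1$ of $\pic_{\mathbb Q}(\Mb3)$ and of the single remaining codimension-two class $\kappa_2$ from the basis \eqref{basiscod2}. Once these four images are known in $A^*(\Mmb08)^{\Sym8}$, the first six rows of the matrix (the images of $\lambda^2,\lambda\delta_0,\lambda\delta_1,\delta_0^2,\delta_0\delta_1,\delta_1^2$) are the six pairwise products of the codimension-one images, expanded in the basis \eqref{basiscod2hyperell} by means of the product relations \eqref{cod1cod2}; the seventh row is simply $\phi^*\kappa_2$.

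First I would determine the codimension-one images. A boundary analysis through admissible covers shows how the invariant divisors $d_{2,6},d_{3,5},d_{4,4}$ sit over the boundary of $\Mb3$: a partition of the eight branch points of type $(2,6)$ or $(4,4)$ keeps the node of the base unramified and yields a non-separating node of the genus-$3$ curve (after contracting the resulting rational bridge in the $(2,6)$ case), while a type-$(3,5)$ partition forces the node to be a branch point and produces a separating node joining a genus-$1$ to a genus-$2$ component. Thus $d_{2,6},d_{4,4}$ lie over $\delta_0$ and $d_{3,5}$ over $\delta_1$; recording the ramification of $\phi$ along each (ramified versus unramified node), exactly as in the genus-$2$ relations \eqref{relationgenus0hyper} and \cite{hamo}, gives $\phi^*\delta_0=2d_{2,6}+2d_{4,4}$ and $\phi^*\delta_1=\tfrac12 d_{3,5}$.

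For $\lambda$ I would avoid pulling back the Hodge class directly and instead invoke Mumford's relation $12\lambda=\kappa_1+\delta_0+\delta_1$ on $\Mb3$, reducing the task to computing $\phi^*\kappa_1$; both $\kappa$-pullbacks are then handled uniformly by Riemann--Hurwitz. Write $\pi\colon\mathcal Y\to\Mmb08$ for the universal genus-$0$ curve, $\eta=c_1(\omega_\pi)$, $D=\sum_i x_i$ for the eight branch sections, and $f\colon\mathcal X\to\mathcal Y$ for the universal double cover, with ramification divisor $R$ satisfying $f^*D=2R$ and $\omega_{\mathcal X}=f^*\omega_\pi\otimes\mathcal O(R)$. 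Expanding $c_1(\omega_{\mathcal X})^{a+1}=(f^*\eta+[R])^{a+1}$, pushing forward by $f$ via $f_*[R]^k=2^{1-k}D^k$, and then by $\pi$ via $\pi_*(\eta^{a+1})=\kappa_a-\tilde{\psi}_a$ and $\pi_*(\eta^{a+1-j}D^{j})=(-1)^{j-1}\tilde{\psi}_a$ for $j\ge1$, I obtain $\phi^*\kappa_1=2\kappa_1-\tfrac12\tilde{\psi}_1$ and $\phi^*\kappa_2=2\kappa_2-\tfrac14\tilde{\psi}_2$, where now the $\kappa$ and $\tilde{\psi}$ are the invariant classes of $\Mmb08$. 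Substituting \eqref{cod1} produces $\phi^*\kappa_1$ and hence $\phi^*\lambda=\tfrac{3}{14}d_{2,6}+\tfrac17 d_{3,5}+\tfrac27 d_{4,4}$, while substituting \eqref{cod2} gives row $7$ directly.

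It then remains to assemble rows $1$ through $6$ by forming the products $(\phi^*\lambda)^2,\ \phi^*\lambda\cdot\phi^*\delta_0,\ \dots,\ (\phi^*\delta_1)^2$ and rewriting every monomial in $d_{2,6},d_{3,5},d_{4,4}$ through \eqref{cod1cod2}; this is lengthy but entirely mechanical, and is where \cite{fabermaple} does the bookkeeping. I expect the one genuinely delicate step to be the honest justification of the two Riemann--Hurwitz formulas over the boundary of $\Mmb08$: there the double cover is ramified exactly over the nodes coming from the odd partitions, and $\phi$ contracts rational bridges, so one must check that $\kappa_a$ of the stable genus-$3$ model coincides with the pushforward computed from the admissible model, and that the identities for $f_*[R]^k$ and for $\pi_*$ survive the boundary corrections. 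Granting this compatibility, the matrix follows.
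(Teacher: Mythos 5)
Your proposal is correct and follows essentially the same route as the paper: determine $\phi^*$ on $\lambda,\delta_0,\delta_1$ (the boundary classes from \cite{hamo}, and $\lambda$ via $12\lambda=\kappa_1+\delta_0+\delta_1$), use multiplicativity together with the product relations \eqref{cod1cod2} for the first six rows, and use $\phi^*\kappa_2=2\kappa_2-\tfrac14\tilde\psi_2$ with \eqref{cod2} for the last. The only difference is that you rederive the pullback formulas for $\kappa_1$ and $\kappa_2$ by Riemann--Hurwitz on the universal double cover, where the paper simply cites \cite[p.~234]{fpmmj}; your derivation is correct and reproduces the cited formulas.
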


\begin{proof} Let us first study the pull-back $\phi^*$ at the level of codimension-$1$ classes. We fix $d_{2,6}$, $d_{3,5}$ and $d_{4,4}$ as a basis of $A^1(\overline{\mathcal{M}}_{0,8})^{\Sym8}$. We have:
\begin{equation}\begin{cases} \label{first}
\phi^* \lambda= & \frac{3}{14} d_{2,6}+ \frac{1}{7} d_{3,5} + \frac{2}{7} d_{4,4},\\
\phi^* \delta_0= & 2 d_{2,6} + 2 d_{4,4} ,\\
\phi^* \delta_1= & \frac{1}{2} d_{3,5} .\\
\end{cases}
\end{equation}
The last two equalities are well-known (see \cite[6.17, 6.18]{hamo}), while the first is obtained from the standard equation $12 \lambda= \kappa_1+ \delta_0 + \delta_1$. Indeed, from \cite[p.~234]{fpmmj}, we have $\phi^*(\kappa_1)= 2 {\kappa_1}- \frac{1}{2} \tilde{\psi_1}$, 
which gives the result by means of \eqref{cod1}.

From this and from \eqref{cod1cod2}, it is immediate to compute the pull-back of the basis \eqref{basiscod2} in terms of the basis \eqref{basiscod2hyperell}. We need only compute $\phi^*(\kappa_2)$, and this can be done by means of the equality $\phi^*(\kappa_2)= 2 {\kappa_2} - \frac{1}{4} \tilde{\psi_2}$ (\cite[p.~234]{fpmmj}). The two terms on the right are expressed in the basis \eqref{basiscod2hyperell} with the use of \eqref{cod2}.
\end{proof}

By putting together Corollary \ref{classinvmz8}, Proposition \ref{pullbackprop}, and Lemma \ref{matrix}, we have an explicit expression for $\bithree$ in the basis \eqref{basiscod2} up to two parameters. For example, the coordinates for $\bithree$ in the basis \eqref{basiscod2} can be written in terms of the coefficient of $\delta_0^2$ (that we call $d$) and of $\epsilon$, the parameter introduced in Proposition \ref{pullbackprop}:
\begin{equation} \label{partialbiell}
\left(\frac{459+560d \epsilon}{6 \epsilon}, \frac{-18-58d\epsilon}{3\epsilon}, \frac{-117-136d\epsilon}{3\epsilon}, d, \frac{18+14d\epsilon}{3\epsilon}, \frac{99+32d\epsilon}{6\epsilon}, -\frac{9}{2\epsilon}\right).
\end{equation}

\section{Test surfaces}

\label{testsurfaces}
In this section we study 
three families of genus $3$ stable curves over surfaces $\Sigma_8$, $\Sigma_1$, 
and $\Sigma_2$. The last 
two of these test surfaces for $\Mb3$ were studied earlier by the first author (see \cite[Section 2]{faber} for details); the 
new test surface $\Sigma_8$ is closely related to $\Sigma_2$. We will be able to count the number of bielliptic curves on each of these families by means of elementary considerations. This will provide the following information:
\begin{enumerate}
\item The computation of the number of bielliptic curves on $\Sigma_8$ will prove that the coefficient $\epsilon$ in Proposition \ref{pullbackprop} equals $1$. 
\item The computation of the number of bielliptic curves on $\Sigma_1$ will complete the proof of our main result: Theorem \ref{bithree}.
\item The computation of the number of bielliptic curves on $\Sigma_2$ gives us a consistency check on Corollary \ref{classinvmz8}.
\end{enumerate} 
We use $\Sigma_8$ (instead of the surface $\Sigma_5$ discussed below)
to avoid excess intersections. 

\begin{remark} \label{transbiell} For a sufficiently general family of stable curves, the intersection between the loci of admissible (double) covers and the locus of singular curves is transversal. This is a consequence of the fact that admissible covers are smoothable (cf.~\cite[Thm.~3.160 and p.~186]{hamo}).
In particular, we will use the transversality of the bielliptic locus and the locus parameterizing singular curves. Compare \cite[Lemma 11.6.15] {acg}, where the authors work out this transversality result for the hyperelliptic locus and the boundary of $\Mb{g}$.
\end{remark}

The new test surface $\Sigma_8$ is the product $E \times \mathbb{P}^1$,
where $(E,0)$ is an elliptic curve on which a general point $a$ is fixed.
Given a simple pencil $\mathcal{P}$ of elliptic curves over $\mathbb{P}^1$
(with $12$ degenerate fibers), the fiber over a point $(e,q)$ with 
$e\notin\{0,a\}$
is obtained by gluing $E$ to itself at the points
$a$, $e$ and to $\mathcal{P}_q$ at the respective origins. When $e=a$,
a degenerate elliptic curve is glued to $E$ at $a$. When $e=0$, the fiber consists
of the $3$ components $E$, $\mathbb{P}^1$, and $\mathcal{P}_q$, with the
identifications $0_E\sim0_{\mathbb{P}^1}$, $a_E\sim \infty_{\mathbb{P}^1}$,
and $1_{\mathbb{P}^1}\sim 0_{\mathcal{P}_q}$.

It is not difficult to check that the only fibers in the family admitting an
admissible bielliptic involution are found when $e=a$ and $\mathcal{P}_q$
is degenerate ($12$ fibers). These curves are all isomorphic and have $8$
automorphisms; exactly $2$ automorphisms are admissible bielliptic involutions.
The value of $\bithree$ on $\Sigma_8$ is therefore at least $24$ and it equals
$24$ if at each of the $12$ points the intersection of $\Sigma_8$ with each
of the two branches of $\overline{\mathcal{B}}_3$ is transversal.

To construct the subfamily where $q$ equals a given point of $\mathbb{P}^1$,
one blows up $E\times E$ at $(0,0)$ and $(a,a)$
and first glues the blown up surface to itself by identifying the proper transforms
of the diagonal and the $a$-section and then glues the obtained surface
to $E\times \mathcal{P}_q$ by identifying the proper transform of the
$0$-section with the $0$-section of $E\times \mathcal{P}_q$. The total family
is obtained by varying $q$.
Up to numerical equivalence, we easily obtain $\lambda=(0,1)$,
$\delta_0=(-2,12)$, and $\delta_1=(0,-1)$. We find the following intersection
numbers:
\begin{displaymath}\lambda^2=\lambda \delta_1= \delta_1^2=\kappa_2=0, \quad 
\lambda\delta_0=-2, \quad \delta_0^2=-48, \quad \delta_0 \delta_1=2.\end{displaymath}
(One obtains $\kappa_2=0$ just as for $\Sigma_2$, cf.~\cite[Section 2.2]{faber}.)

Formula (\ref{partialbiell}) gives then 
$24/\epsilon$
for the value of $\bithree$ on $\Sigma_8$. Since $\epsilon$ is an integer,
we immediately conclude that it equals $1$ and that the transversality
alluded to above indeed holds. In fact, the transversality of $\Sigma_8$ 
to the branches of $\overline{\mathcal{B}}_3$ can be seen easily, which
avoids the use of the integrality of $\epsilon$. (The essential point
is that there are two different simultaneous infinitesimal smoothings
of the two nodes of type $\delta_1$, differing by composition with
$-{\rm Id}$ and both preserving exactly one bielliptic involution. From this,
one easily deduces the transversality of the tangent space to $\Sigma_8$
to the two tangent spaces to the branches of $\overline{\mathcal{B}}_3$.)

Therefore, the coordinates for $\bithree$ in the basis \eqref{basiscod2} become:
\begin{equation} \label{lesspartialbiell}
\left(\frac{459+560d }{6 }, \frac{-18-58d}{3}, \frac{-117-136d}{3}, d, \frac{18+14d}{3}, \frac{99+32d}{6}, -\frac{9}{2}\right).
\end{equation}

For the first test surface (\cite[Section 2.1]{faber}), we consider a general curve $C$ of genus $2$. Over the surface $\Sigma_1=C \times C$, there is a family whose fiber over $(p,q)$ is obtained by gluing on $C$ the two points $p$ and $q$. The fibers admit a (unique) admissible bielliptic involution when $p$ and $q$ are distinct Weierstrass points of $C$, thus there are $30=6 \times 5$ such fibers. 
By Remark \ref{transbiell} above, the intersection of $\overline{\mathcal{B}}_3$
and $\Sigma_1$ is transversal (note that at each of the $30$ points
the tangent space to $\Mb3$ equals the direct sum of the tangent space
to $\overline{\mathcal{B}}_3$ and the tangent space to the surface).
So the value of the class $\bithree$ restricted to $\Sigma_1$ equals $30$. We read in \cite[Proposition 2.1]{faber} that on this surface the following equalities hold: \begin{displaymath}\lambda^2=\lambda \delta_0= \lambda \delta_1=\delta_0 \delta_1=0, \quad \delta_0^2=16, \quad \delta_1^2=-2, \quad \kappa_2=2.\end{displaymath}
Therefore, equation \eqref{lesspartialbiell} gives the last parameter: $d=\frac{27}{2}$. Theorem \ref{bithree} is proved.

The second test surface (\cite[Section 2.2]{faber}) is the product $C \times \mathbb{P}^1$, where $C$ is a general genus $2$ curve. 
Given a simple pencil of elliptic curves over $\mathbb{P}^1$, the fiber over a point $(p,q)$ is obtained by identifying the point $p$ of $C$ with the origin of the elliptic curve over $q$. 
No 
fiber admits an admissible bielliptic involution. We read in \cite[Proposition 2.2]{faber} that on this surface the following equalities hold: \begin{displaymath}\lambda^2=\lambda \delta_0= \delta_0^2=\kappa_2=0, \quad \lambda \delta_1=-2, \quad \delta_0 \delta_1=-24, \quad \delta_1^2=4.\end{displaymath}
These numbers, substituted in equation \eqref{partialbiell}, give a nontrivial consistency check of Corollary \ref{classinvmz8} and of Lemma \ref{matrix}.

Note that after the results of \cite[Section 2]{faber}, it is equivalent to know the class of the bielliptic locus in the basis \eqref{basiscod2}, and to know the restriction of the bielliptic class to the seven test surfaces. While we can compute $\bithree$ on $\Sigma_1, \ldots, \Sigma_5$, we do not know a direct way to compute it on $\Sigma_6$ and $\Sigma_7$.

After Theorem \ref{bithree} and \cite[Section 2]{faber} however, 
we immediately obtain:
\begin{displaymath}
\bithree|_{\Sigma_3}=-24, \quad \bithree|_{\Sigma_4}=33, 
\quad \bithree|_{\Sigma_5}=9,
\quad \bithree|_{\Sigma_6}=225, \quad \bithree|_{\Sigma_7}=675.
\end{displaymath}

A direct computation of $\bithree$ on
the fifth test surface (\cite[Section 2.5]{faber}) runs as follows.
We consider two elliptic curves $(E,0)$ and $(F,0)$. Over the surface $\Sigma_5=E \times F$, there is a family of genus $3$ curves, whose fiber over $(e,f)$ with $e\neq0$ and $f\neq0$ is obtained by gluing $E$ and $F$ at the origins and at $e,f$. For a fiber to admit an admissible bielliptic involution, $e$ or $f$ must be a point of order $2$.
The bielliptic locus $\overline{\mathcal{B}}_3$ thus has excess intersection with $\Sigma_5$ along $3$ horizontal and $3$ vertical curves. As it turns out, the \emph{excess} intersection does not contribute to the intersection number. However, the fibers over the $9$ points of intersection carry a \emph{third} admissible bielliptic involution, yielding $9$ for the value of $\bithree$ on $\Sigma_5$.

Let us also consider the sixth test surface $\Sigma_6$ (\cite[Section 2.6]{faber}). It is obtained by applying stable reduction to a general linear $\mathbb{P}^2$ inside the $\mathbb{P}^{14}$ of plane quartics. Since the bielliptic locus has codimension $2$ and the locus of singular curves has codimension $1$, for such a $\mathbb{P}^2$ 
the bielliptic points correspond to smooth bielliptic curves.
Again by genericity, we have that the number $225$ enumerates the number of smooth bielliptic curves on the linear $\mathbb{P}^2$, and that the codimension-$2$ bielliptic locus in $\mathbb{P}^{14}$ has degree $225$.



\end{document}